\documentclass[12pt]{elsarticle}
\usepackage[utf8]{inputenc}
\usepackage{amsmath}
\usepackage{bbold}
\usepackage{latexsym}
\usepackage{epsfig}
\usepackage{psfrag}
\usepackage{enumerate}
\usepackage[utf8]{inputenc}                                       
\usepackage{newtxtext}
\usepackage{xcolor}
\usepackage{graphicx}
\usepackage{float}
\makeatletter
\def\ps@pprintTitle{%
 \let\@oddhead\@empty
 \let\@evenhead\@empty
 \def\@oddfoot{\centerline{\thepage}}%
 \let\@evenfoot\@oddfoot}
\makeatother

\newtheorem{thm}{Theorem}[section]

\newtheorem{lemma}{Lemma}[section]

\newtheorem{cor}{Corollary}[section]

\newtheorem{example}{Example}[section]

\newtheorem{conjecture}{Conjecture}[section]

\usepackage{amssymb}

\begin{document}
\begin{frontmatter}

\title{Maximal Projection Constants and Extremal Vector Configurations: Some Conjectures and Examples}

 \author[label1]{Beata~Deregowska}
 
 \author[label4]{Barbara~Lewandowska \footnote{B.L. is partially supported by National Science Center (NCN) grant no. 2025/09/X/ST1/01604 }}

 \address[label1]{Institute of Mathematics\\
University of the National Education Commission, Podchorazych~2, Krakow, 30-084, Poland}
\address[label4]{Faculty of Mathematics and Computer Science\\
Jagiellonian University, Lojasiewicza~6, Krakow, 30-048, Poland}



\begin{abstract}
Let $\lambda_{\mathbb K}(m)$ denote the maximal absolute projection constant among $m$-dimensional Banach spaces over $\mathbb K=\mathbb R$ or $\mathbb C$. Its exact value is known only in a few cases, and determining it remains a challenging problem. In this note, we investigate several structured vector configurations that naturally arise in this context. We first recall the connection between maximal projection constants and tight frames, then consider biangular tight frames, and show that their relative and quasimaximal projection constants coincide. A particularly interesting example is provided by the midpoints of the edges of a regular simplex, which yield natural lower bounds for $\lambda_{\mathbb R}(m)$. Numerical evidence suggests that these bounds may be sharp in dimensions $6$ and $8$. We also discuss weighted spherical $(2,2)$-designs with a small number of vectors and their connection with maximal projection constants. Examples in dimensions $4$ and $5$ indicate that the sign patterns of their Gram matrices may play an important role. These observations lead us to formulate several conjectures concerning maximal absolute projection constants and the vector configurations associated with them.

\end{abstract}
\begin{keyword}

maximal absolute projection constant \sep 
maximal relative projection constant \sep 
 quasimaximal relative projection constant\sep
tight frames\sep 
spherical $(t,t)$-designs



\MSC   46B20 \sep 15A42 \sep 42C15
\end{keyword}
\end{frontmatter}
\section{Introduction}
We begin by recalling the basic notions concerning projection constants that will be used throughout the paper. Let $X$ be a Banach space over $\mathbb{K},$ where $\mathbb{K}=\mathbb{R}$ or $\mathbb{K}=\mathbb{C}.$ Let $Y\subset X$ be a subspace.
By $\mathcal{P}(X,Y)$ denote the set of all linear and continuous projections from $X$ onto $Y$,
recalling that an operator $P \colon X \rightarrow Y$ is called a \textit{projection} onto $Y$ if $P|_Y={\rm Id}_Y.$ 
We define the \textit{relative projection constant} of subspace $Y$  and space $X$ by
\begin{equation*}
\lambda(Y,X) :=\inf\lbrace\|P\|:\;P\in\mathcal{P}(X,Y)\rbrace.
\end{equation*}
Notice that the set $\mathcal{P}(X,Y)$ can be empty (e.g. $\mathcal{P}(\ell_{\infty},c_{0})$ ). Then we will assume that $\lambda(Y,X)=\infty.$
Now we can define the \textit{absolute projection constant} of $Y$ by
\begin{equation}
\label{DefMAPC}
\lambda(Y) :=\sup\lbrace\lambda(Y,X):Y\subset X\rbrace.
\end{equation}
A natural question is how large the absolute projection constant can be among all $m$-dimensional Banach spaces. This leads to the notion of the maximal absolute projection constant,
\begin{equation*}
\lambda_{\mathbb{K}}(m) :=\sup \lbrace\lambda(Y):\; \dim(Y)=m \rbrace.
\end{equation*}
The problem of determining or estimating maximal absolute projection constants has attracted considerable attention in the theory of Banach spaces over the past decades.
By the Kadec--Snobar theorem (see \cite{KS}),
we have $\lambda(m)\leq \sqrt{m}$. 
Moreover, it has been shown in \cite{K} that this estimate is asymptotically the best possible. However, as seen later, this inequality is strict for $m > 1.$ In 1960, B. Grünbaum conjectured that $\lambda_\mathbb{R}(2)=\frac{4}{3}$ (see \cite{G}), and in 2010, B. Chalmers and G. Lewicki proved it (see \cite{CL}). In 2019, G. Basso delivered the alternative proof of this conjecture (see \cite{B}). For many years it was widely believed that determining $\lambda_{\mathbb{K}}(m)$ for dimensions greater than two was beyond reach. However, connecting this problem with different structures of vectors led to some partial results and showed a possible way to solve it.

The connection with frame theory is based on the observation that the maximal absolute projection constant can be computed by considering finite-dimensional subspaces of $\ell_\infty$ (see, e.g., \cite[III.B.5]{W}).  Therefore, it can be defined as a supremum of \textit{maximal relative projection constants} for $N \ge m,$ given by 
\begin{equation*}
\lambda_{\mathbb{K}}(m,N):=\sup\lbrace \lambda(Y, \ell_\infty^{(N)}(\mathbb{K})):\; \dim(Y)=m \textrm{ and } Y\subset \ell_\infty^{(N)}(\mathbb{K})\rbrace.
\end{equation*}   
The crucial tool in our investigation is the following theorem, originally stated in \cite [Theorem 2.2]{CLe}.
\begin{thm}\label{lammbda}
For integers $N \ge m$, we have
\begin{align}\label{lambda1}
    \lambda_{\mathbb{K}}(m,N)&=\max\bigg\lbrace \sum_{i,j=1}^N t_it_j|U^* U|_{ij}:t\in\mathbb{R}_+^N,\;\|t\|=1,U\in \mathbb{K}^{m\times N},\; UU^*={\rm I}_m \bigg\rbrace 
\end{align}
\end{thm}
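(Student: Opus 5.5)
The plan is to establish the two inequalities behind \eqref{lambda1} by dualizing the minimal projection problem for a fixed subspace $Y\subset\ell_\infty^{(N)}$. We then maximize over $Y$, which amounts to maximizing over all $U$ with orthonormal rows. Fix $Y$ of dimension $m$ and choose $U\in\mathbb K^{m\times N}$ with $UU^*={\rm I}_m$ whose rows span $\overline{Y}$ (or $Y$ itself in the real case). The columns $u_1,\dots,u_N\in\mathbb K^m$ of $U$ then form a Parseval frame. Every projection $P$ onto $Y$ has the form $P=U^*U+W$, where $W$ is any $N\times N$ matrix with $W|_Y=0$ and ${\rm ran}\,W\subset Y$. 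Its norm on $\ell_\infty^{(N)}$ is the maximal row $\ell_1$-norm, $\|P\|=\max_i\sum_j|P_{ij}|$.

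\textbf{Upper bound.} We need $\lambda(Y,\ell_\infty^{(N)})\le \max_{t,U}\sum_{ij}t_it_j|U^*U|_{ij}$. Use Lagrangian duality: the minimal norm equals
\[
\sup_{\mu}\,\inf_{W}\,\sum_i\mu_i\sum_j|P_{ij}|,
\]
where $\mu$ runs over probability vectors. By convex duality, and since the set of admissible $W$ is an affine subspace, this equals $\max_\mu\sup_{\Phi}{\rm Re}\,{\rm tr}(\Phi^*U^*U)$. Here $\Phi$ ranges over matrices with $|\Phi_{ij}|\le\mu_i$ that annihilate the admissible $W$, i.e. $\Phi$ lies in the "commutant" $\{\Phi:\ U\Phi^{*}=$ something compatible$\}$. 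The key step, following the Chalmers--Lewicki/Rudin averaging idea, is to show that at the optimum $\Phi_{ij}=\mu_i\,\overline{{\rm sgn}(U^*U)_{ij}}\cdot$(correction). This forces the extremal $P$ to satisfy $\sum_j|P_{ij}|=\|P\|$ for all $i$ with $\mu_i>0$ and $P_{ij}\overline{\varepsilon_{ij}}\ge0$.

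Writing $t_i=\sqrt{\mu_i}$ and replacing $U$ by $U D_t$, renormalized so that the rows stay orthonormal (after restricting to a subspace where this is possible), we get $\|P\|=\sum_{ij}t_it_j|\tilde U^*\tilde U|_{ij}$. Concretely, I would test the minimal projection $P$ against the functional $A\mapsto \sum_{ij}t_i t_j \overline{\varepsilon_{ij}}A_{ij}$. The conditions ${\rm tr}(W\,\cdot)=0$ for all admissible $W$ mean this functional annihilates the $W$-part, which yields $\lambda\le\sum t_it_j|(\tilde U^*\tilde U)_{ij}|$.

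\textbf{Lower bound.} Given $t$ and $U$, consider $Y'=$ the row space of $U D_t^{-1}$ (perturbing $t$ to have positive entries). For any projection $P$ onto $Y'$, the relation $PU^*=U^*$ translates into ${\rm tr}$-identities. Estimate
\[
\|P\|\ge\sum_i t_i^2\sum_j|P_{ij}|\ge\Big|\sum_{ij}t_i^2 P_{ij}\,\overline{\varepsilon_{ij}}\,t_j/t_i\Big|,
\]
with $\varepsilon_{ij}={\rm sgn}(U^*U)_{ij}$. Then use the projection identity to replace $P$ by $U^*U$ in this pairing, which requires showing the pairing matrix $D_t E D_t$ (with $E=(\varepsilon_{ij})$), weighted appropriately, maps into $Y'$. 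This is the delicate part: it holds only at a critical point of the objective. So I would first pass to a maximizer $(t,U)$, using compactness, which is why the maximum is attained, and then use the first-order conditions in $U$ (variations $U\mapsto e^{iH}U$) and in $t$. These give $D_tED_t U^*=U^*\Lambda$ and $t_i\sum_j t_j|U^*U|_{ij}=$ const, exactly what is needed.

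The main obstacle is this step: extracting from the stationarity conditions the invariance that lets the test functional ignore the $W$-part. Nonsmoothness where $(U^*U)_{ij}=0$ must be handled by a subgradient choice of $\varepsilon_{ij}$.
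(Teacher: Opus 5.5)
The paper gives no proof of this statement; it is quoted from \cite[Theorem 2.2]{CLe}, so your sketch has to stand on its own. The architecture is right: strong duality for a fixed $Y$ gives ``$\le$'', and a certificate built from $D_t$ and a sign matrix gives ``$\ge$''. In both halves, however, the step that does the work is unproved or misstated, and the $D_t$-bookkeeping is inconsistent.

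\textbf{Upper bound.} Use the pairing ${\rm tr}(\Phi^*P)$. A functional kills every admissible $W$ iff $\Phi^*Y\subset Y$ (an invariance condition, not a ``commutant'' of $U$), and the dual of the row-sum norm is $\sum_i\max_j|\Phi_{ij}|$. Hence $\lambda(Y,\ell_\infty^{(N)})=\max{\rm Re}\,{\rm tr}(\Phi^*|_Y)$ over such $\Phi$ of dual norm at most $1$. Your key claim is that the optimal $\Phi$ carries the phases of ${\rm sgn}(U^*U)$ up to a ``correction''. It is not proved and has no reason to hold: complementary slackness ties the phases of $\Phi$ to a \emph{minimal} projection, not to the orthogonal projection $U^*U$. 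Likewise the equality $\|P\|=\sum t_it_j|\tilde U^*\tilde U|_{ij}$ fails in general. You need neither, nor any averaging. Write $\Phi=D_\mu\Sigma$ with $|\Sigma_{ij}|\le 1$, $t_i=\sqrt{\mu_i}$, $T=D_t$, so $\|t\|_2^2=\sum_i\mu_i=1$. If some $\mu_i=0$, use $\Phi+\varepsilon{\rm I}$ instead, which keeps $\Phi^*Y\subset Y$; renormalize and let $\varepsilon\to0$. Since $\Phi^*=T^{-1}(T\Sigma^*T)T$, the matrix $T\Sigma^*T$ leaves $TY$ invariant. Taking $\tilde U\tilde U^*={\rm I}_m$ with ${\rm ran}(\tilde U^*)=TY$, you get
\[
{\rm Re}\,{\rm tr}(\Phi^*|_Y)={\rm Re}\,{\rm tr}(\tilde U\,T\Sigma^*T\,\tilde U^*)={\rm Re}\sum_{i,j}t_it_j\overline{\Sigma_{ji}}\,(\tilde U^*\tilde U)_{ji}\le\sum_{i,j}t_it_j|\tilde U^*\tilde U|_{ij},
\]
which uses only $|\Sigma_{ij}|\le1$.

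\textbf{Lower bound.} The second inequality in your displayed chain is false in general: $\sum_it_i^2\sum_j|P_{ij}|$ does not dominate $|\sum_{i,j}t_it_j\overline{\varepsilon_{ij}}P_{ij}|$. Also, $U^*U$ is not a projection onto $Y'={\rm ran}(D_t^{-1}U^*)$ unless $t$ is constant. The correct certificate is $\Phi=D_t^2E$.
\begin{enumerate}[i)]
\item It satisfies $|{\rm tr}(\Phi^*P)|\le\sum_it_i^2\sum_j|P_{ij}|\le\|P\|$.
\item $\Phi^*Y'\subset Y'$ holds iff $D_tED_t$ leaves ${\rm ran}(U^*)$ invariant.
\item Under that invariance you may replace $P$ by $D_t^{-1}U^*UD_t$, which gives exactly $\sum t_it_j|U^*U|_{ij}$.
\end{enumerate}
The more serious problem is getting the invariance from stationarity at a maximizer. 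This conflicts with ``perturbing $t$ to have positive entries'': a maximizer may have $t_i=0$, where $D_t^{-1}$ does not exist, and perturbing $t$ destroys stationarity. Drop the maximizer instead. For arbitrary $(t,U)$ put $E={\rm sgn}(U^*U)$. By Ky Fan,
\[
\sum t_it_j|U^*U|_{ij}={\rm tr}(UD_tED_tU^*)\le\sum_{k\le m}\lambda_k^\downarrow(D_tED_t).
\]
For any $s$ with positive entries and $\|s\|=1$, the span of the top $m$ eigenvectors of $D_sED_s$ is automatically invariant. The certificate $D_s^2E$ then yields $\lambda_{\mathbb K}(m,N)\ge\sum_{k\le m}\lambda_k^\downarrow(D_sED_s)$, and letting $s\to t$ (eigenvalues are continuous) finishes. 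This also dissolves what you call the main obstacle and the subgradient issue: no first-order conditions in $t$ or $U$ are needed at all.
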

\noindent
The literature also deals with  the easier-to-calculate lower bound  of $\lambda_{\mathbb{K}}(m,N)$ called the {\it quasimaximal relative projection constant}, which arises when choosing a vector $t$ with equal coordinates. To be more precise, for $N\geq m$
\begin{equation}\label{mi1}
    \mu_\mathbb{K}(m,N):=\max\bigg\lbrace \frac{1}{N}\sum_{i,j=1}^{N}|U^* U|_{ij}:U\in \mathbb{K}^{m\times N},\; UU^*={\rm I}_m \bigg\rbrace .
\end{equation}
In fact, we can calculate the absolute projection constant by taking the supremum of quasimaximal relative projection constants. In the real case, it was proved by Basso \cite[Proof of Theorem 1.2]{B}. In the paper \cite{BB2}, we gave an elementary proof of this fact, which is also valid in the complex case.

Since the condition $UU^*={\rm I}_m$ appearing in Theorem~\ref{lammbda} is exactly the Parseval frame condition, it is convenient to reformulate the optimization problem in the language of frame theory. We therefore recall the basic definitions. A system of vectors $(u_1,\dots, u_N)$ in $\mathbb{K}^m$ is called a {\it tight frame} if there exists a constant $\alpha >0$ such that one of the following equivalent conditions holds:
\begin{itemize}
    \item  $\|x\|^2=\alpha\sum_{k=1}^{N}|\langle x, u_k \rangle|^2$  \; for all $x\in \mathbb{K}^m.$ 
     \item  $x=\alpha\sum_{k=1}^{N}\langle x, u_k \rangle u_k$  \; for all $x\in \mathbb{K}^m.$ 
    \item $UU^*=\frac{1}{\alpha}{\rm I_m}$, where $U$ is the matrix with columns $u_1,\dots, u_N.$ 
\end{itemize}   
If $\alpha= 1,$ then a tight frame is called {\it Parseval frame.}

For convenience, we associate with every matrix $U$ with columns $u_1,\dots,u_N\in\mathbb K^m$ the quantities
\begin{equation}
\mu_{\mathbb{K}}(U)=\frac{1}{N}\sum_{i,j=1}^N|\langle u_i,u_j\rangle|
\end{equation}

\begin{equation}
\lambda_{\mathbb{K}}(U)=\max\{\sum_{i,j=1}^N t_i t_j |\langle u_i,u_j\rangle|:\;\; t\in \mathbb{R}_+^N\;\;\|t\|_2=1\}
\end{equation} 
With this notation, \eqref{mi1} and  \eqref{lambda1}  can be written simply as
\[
\mu_{\mathbb K}(m,N)=
\max\{\mu_{\mathbb K}(U): (u_1,\dots, u_N) \textrm{ is Parseval frame }\},
\]
and
\[
\lambda_{\mathbb K}(m,N)=
\sup\{\lambda_{\mathbb K}(U):(u_1,\dots, u_N) \textrm{ is Parseval frame }\}.
\]

\noindent The system $(u_1,\dots, u_N)$ of unit vectors in $\mathbb{K}^m$ is called an {\it equiangular tight frame} ETF$(m,N)$ if it is tight and
the value of $|\langle u_i, u_j\rangle|$ is constant over all $i\neq j.$
It is well known (see e.g. \cite[Theorem 5.7]{FR} ) that if $(u_1,\ldots u_N )$ is an ETF$(m,N)$ then
\begin{equation}\label{Welch Bound}
|\langle u_i,u_j \rangle|=\sqrt{\frac{N-m}{m(N-1)}}
\qquad \textrm{ for all } i,j\in \{1,\dots, N \},\;i\neq j,
\end{equation}
and the constant $\alpha$ is also determined and is equal to $\frac{m}{N}.$
Notice that the existence of an equiangular tight frame consisting of $N$ unit vectors in $\mathbb{K}^m$ is equivalent to the existence of a Parseval frame such that
\begin{equation*}
    (U^*U)_{ii}=\frac{m}{N} \; \textrm{ and }\; |U^*U|_{ij}=\frac{m}{N}\sqrt{\frac{N-m}{m(N-1)}} \;\textrm{ for }\; i\neq j,
\end{equation*}
where $i,j\in \{1,\dots, N\}.$ Moreover, the quantity $\mu_{\mathbb{K}}(U)=\lambda_{\mathbb{K}}(U)$ meets the upper bound for the maximal relative projection constant, given in \cite{KLL}. We present it in the form stated in \cite [ Theorem 5 ]{ FS}, where also an easier proof of this result was provided.

\begin{thm}\label{SF}

For integers $N\geq m$, the maximal relative projection constant $\lambda_{\mathbb{K}}(m,N)$ is upper bounded by
$$
\delta_{m,N} := \frac{m}{N} \left( 1 + \sqrt{\frac{(N-1)(N-m)}{m}} \right).
$$
Moreover, the following properties are equivalent:
\begin{enumerate}[i)]
\item There is an equiangular tight frame consisting of $N$ vectors in $\mathbb{K}^m,$
\item $\mu_\mathbb{K}(m,N)=\tfrac{m}{N}\left(1 +\sqrt{\tfrac{(N-1)(N-m)}{m}} \right),$
\item $\lambda_\mathbb{K}(m,N)=\tfrac{m}{N}\left(1 +\sqrt{\tfrac{(N-1)(N-m)}{m}} \right).$
\end{enumerate}
\end{thm}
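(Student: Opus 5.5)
The plan is to start from the formula in Theorem~\ref{lammbda}. Fix a Parseval frame $U$ with Gram matrix $G=U^*U$, which is an orthogonal projection of rank $m$, so $G^2=G$, $\operatorname{tr}G=m$ and $G_{ii}=\|u_i\|^2$. Fix also $t\in\mathbb R_+^N$ with $\|t\|=1$. We split the objective as
\[
\sum_{i,j} t_it_j|G_{ij}| \;=\; \sum_i t_i^2 G_{ii} \;+\; \sum_{i\neq j} t_it_j |G_{ij}|.
\]
The off-diagonal part is bounded by Cauchy--Schwarz:
\[
\sum_{i\ne j}t_it_j|G_{ij}|\le \Bigl(\sum_{i\ne j}t_i^2t_j^2\Bigr)^{1/2}\Bigl(\sum_{i\ne j}|G_{ij}|^2\Bigr)^{1/2}.
\]
Idempotence gives $\sum_j|G_{ij}|^2=G_{ii}$, so $\sum_{i\neq j}|G_{ij}|^2=\sum_i (G_{ii}-G_{ii}^2)$. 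Writing $a_i=G_{ii}\in[0,1]$ and $s_i=t_i^2$, the problem reduces to a finite-dimensional inequality
\[
F(s,a)=\sum_i s_ia_i+\Bigl(1-\sum_i s_i^2\Bigr)^{1/2}\Bigl(m-\sum_i a_i^2\Bigr)^{1/2}\le \delta_{m,N},
\]
to be shown over $s$ in the simplex and $\sum a_i=m$, $0\le a_i\le 1$.

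The main obstacle is this last optimization, since $s$ and $a$ are coupled. I would first try a direct Cauchy--Schwarz style trick: write $\sum s_ia_i=\frac mN+\sum_i (s_i-\frac1N)(a_i-\frac mN)$ and bound the correction by $\|s-\frac1N\|\,\|a-\frac mN\|$. Then $\|s-\frac1N\|^2=\sum s_i^2-\frac1N$ and $\|a-\frac mN\|^2=\sum a_i^2-\frac{m^2}N$. Setting $x=\sum s_i^2-\frac1N\in[0,1-\frac1N]$ and $y=\sum a_i^2-\frac{m^2}{N}\in[0,m-\frac{m^2}N]$, we get
\[
F\le \tfrac mN+\sqrt{xy}+\sqrt{(1-\tfrac1N-x)\,(m-\tfrac{m^2}N-y)}.
\]
By Cauchy--Schwarz in $\mathbb R^2$ this is at most
\[
\tfrac mN+\sqrt{(1-\tfrac1N)(m-\tfrac{m^2}{N})}=\tfrac mN\Bigl(1+\sqrt{\tfrac{(N-1)(N-m)}{m}}\Bigr)=\delta_{m,N}.
\]
This gives the bound.

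For the equivalences, iii)$\Rightarrow$? needs tracking of equality, and i)$\Rightarrow$ ii) is a direct computation. For an ETF rescaled to be Parseval, $G_{ii}=\frac mN$ and the off-diagonal moduli are given by \eqref{Welch Bound}. Hence $\mu_{\mathbb K}(U)=\frac1N\bigl(m+N(N-1)\frac mN\sqrt{\frac{N-m}{m(N-1)}}\bigr)=\delta_{m,N}$. Since $\mu\le\lambda\le\delta_{m,N}$, we also get ii)$\Rightarrow$iii).

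For iii)$\Rightarrow$i), we take a maximizer, which exists by compactness, and force equality in every step. Equality in the first Cauchy--Schwarz forces $|G_{ij}|=c\,t_it_j$ for $i\ne j$. Equality in the final two-dimensional Cauchy--Schwarz, combined with the correction-term equality, forces $s$ and $a$ to be constant when $m<N$. Then $t_i=1/\sqrt N$, so all $|G_{ij}|$ with $i\neq j$ are equal and all $G_{ii}=\frac mN$. Normalizing the columns then yields an ETF. The delicate point is justifying the equality cases, including degenerate ones such as $x=0$ or $y=0$. The case $m=N$ is trivial.
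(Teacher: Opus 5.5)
Your proof of the bound $\lambda_{\mathbb K}(m,N)\le\delta_{m,N}$ is correct and self-contained. The paper gives no proof of this theorem and only quotes \cite{KLL} and \cite[Theorem 5]{FS}. Your steps i)$\Rightarrow$ii)$\Rightarrow$iii) are also correct. The gap is in iii)$\Rightarrow$i). You claim that equality in the centering step and in the two-dimensional Cauchy--Schwarz step forces $s$ and $a$ to be constant; this is false. Put $A=1-\frac1N$, $B=m-\frac{m^2}{N}$ and $\kappa=\sqrt{B/A}=\sqrt{m(N-m)/(N-1)}$. Those two equality conditions only say that either $x=y=0$ or $a_i-\frac mN=\kappa\,(s_i-\frac1N)$ for all $i$. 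The latter relation already implies $y=\kappa^2x$, hence $x/A=y/B$, and then $F(s,a)=\frac mN+\kappa x+\kappa(A-x)=\delta_{m,N}$ along the whole line. For instance, with $m=2$, $N=3$, the choice $s=(\frac13+\varepsilon,\frac13-\varepsilon,\frac13)$, $a=(\frac23+\varepsilon,\frac23-\varepsilon,\frac23)$ gives $F(s,a)=\frac43=\delta_{2,3}$. So your reduced problem has nonconstant maximizers, and no treatment of the degenerate cases $x=0$ or $y=0$ can rescue the argument: passing to $F(s,a)$ discards the information you need. For $m=1$ the constancy claim even fails for genuine maximizers, e.g.\ $t=u=e_1$, although the theorem is trivial there.

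The missing idea is to use idempotence of $G$ row by row, not only summed. Put $\theta=x/A=y/B\in[0,1]$.

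If $\theta=1$, then $s$ is a vertex of the simplex and the objective equals some $a_k\le 1$. This is $<\delta_{m,N}$ for $m\ge2$, since $\delta_{m,N}>1\iff m>1$ when $m<N$.

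If $\theta<1$, then $\sum_{i\ne j}t_i^2t_j^2=A-x>0$. Equality in your first Cauchy--Schwarz then gives $|G_{ij}|=c\,t_it_j$ with $c^2=(B-y)/(A-x)=\kappa^2$. Therefore $a_i-a_i^2=\sum_{j\ne i}|G_{ij}|^2=\kappa^2 s_i(1-s_i)$ for every $i$. If in addition $\theta>0$, substitute $a_i=\alpha+\kappa s_i$ with $\alpha=(m-\kappa)/N$. This gives $\alpha(1-\alpha)+\kappa(1-2\alpha-\kappa)\,s_i=0$ for all $i$. Both coefficients vanish only when $m=1$ or $m=N$, so for $2\le m<N$ this equation has at most one solution $s_i$. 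Hence $s$ is constant, contradicting $\theta>0$.

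So $\theta=0$, i.e.\ $s\equiv\frac1N$ and $a\equiv\frac mN$. Then $|G_{ij}|=c/N$ for $i\ne j$, and normalizing the columns gives the ETF exactly as you say.
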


In 1994, H. K\"onig and N. Tomczak-Jaegermann stated the following estimation.
\begin{thm}[stated in \cite{KT}; proved in \cite{BB2}]\label{nMPC}
Let $ m>1$ then
\begin{enumerate}[i)]
    \item $\lambda_\mathbb{R}(m) \leq \frac{2}{m+1}\left(1+\frac{m-1}{2}\sqrt{m+2}\right)\,$ 
    \item $ \lambda_\mathbb{C}(m)\leq \frac{1}{m}\left(1+(m-1)\sqrt{m+1}\right).$ 
\end{enumerate}
\end{thm}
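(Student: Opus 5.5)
We reduce to the quasimaximal formulation. Since $\lambda_{\mathbb K}(m)=\sup_N \mu_{\mathbb K}(m,N)$, as recalled above from \cite{B} and \cite{BB2}, it suffices to show that for every $N\ge m$ and every Parseval frame $(u_1,\dots,u_N)$ in $\mathbb K^m$ one has $\mu_{\mathbb K}(U)\le c_{\mathbb K}(m)$, where $c_{\mathbb K}(m)$ is the right-hand side in i) or ii). Write $G=U^*U$. It is an orthogonal projection of rank $m$, so $\operatorname{tr}G=m$ and $\sum_{i,j}|G_{ij}|^2=m$. We must then bound $\frac1N\sum_{i,j}|G_{ij}|$. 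The naive Cauchy--Schwarz argument, splitting off the diagonal, only yields the ETF bound $\delta_{m,N}$ of Theorem~\ref{SF}. That bound grows like $\sqrt m$ as $N\to\infty$, so a uniform estimate needs more than second moments.

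The key idea is to use fourth moments, namely the frame potential of the tensor square. The vectors $u_i\otimes u_i$ in the real case, or $u_i\otimes\bar u_i$ in the complex case, live in the space of symmetric (respectively Hermitian) matrices. That space has dimension $d=\frac{m(m+1)}2$ for $\mathbb R$ and $d=m^2$ for $\mathbb C$. Project onto the trace-zero part, which has dimension $d-1$. The tight-frame identity $\sum_i u_iu_i^*=I$ makes the trace-free parts $v_i=u_iu_i^*-\frac{\|u_i\|^2}{m}I$ sum to zero. A Gram-type positivity argument, the standard one behind Welch/design bounds, then gives
\[
\sum_{i,j}|G_{ij}|^2\ \ge\ \frac{\big(\sum_i\|u_i\|^2\big)^2}{m}+\text{(terms)}\quad\text{and in particular}\quad \sum_{i,j}|G_{ij}|^4\ge \frac{(\sum_i \|u_i\|^4)\cdot\text{const}}{}\ldots
\]
More concretely, I would aim for the inequality
\[
\sum_{i\ne j}|G_{ij}|^2\,\le\, \ldots
\]
combined with the weighted estimate. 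We decompose $\sum_{i,j}|G_{ij}|=\sum_i G_{ii}+\sum_{i\ne j}|G_{ij}|$. We then apply Cauchy--Schwarz to the off-diagonal part with weights $\sqrt{G_{ii}G_{jj}}$:
\[
\sum_{i\ne j}|G_{ij}|\le\Big(\sum_{i\ne j}\frac{|G_{ij}|^2}{G_{ii}G_{jj}}\Big)^{1/2}\Big(\sum_{i\ne j}G_{ii}G_{jj}\Big)^{1/2}.
\]
This converts the problem into a bound on normalized squared correlations $|\langle \hat u_i,\hat u_j\rangle|^2$ of the unit vectors $\hat u_i$. The quantity $\sum_{i,j} w_iw_j|\langle\hat u_i,\hat u_j\rangle|^2$ with weights $w_i=G_{ii}$ is exactly the weighted frame potential of the lifted vectors $\hat u_i\hat u_i^*$ in the $d$-dimensional space. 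Tightness of the original frame fixes the component along $I$. The bound on the remaining $(d-1)$-dimensional component should give an inequality of the form: average off-diagonal correlation at most $\frac{1}{d}$-type quantities. Equality would hold for a maximal equiangular configuration, with $N=d$ vectors, inner products $1/(m+2)$ in the real case and $1/(m+1)$ in the complex case. Optimizing in $N$ then produces exactly $\frac{2}{m+1}(1+\frac{m-1}2\sqrt{m+2})=\delta_{m,m(m+1)/2}$ and $\frac1m(1+(m-1)\sqrt{m+1})=\delta_{m,m^2}$. So the target is to prove $\mu_{\mathbb K}(m,N)\le\delta_{m,d}$ for all $N$, which is known for $N\le d$ by Theorem~\ref{SF} and monotonicity of $\delta_{m,N}$ up to $d$.

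The main obstacle is the case $N>d$. Here Theorem~\ref{SF} is too weak, and one must exploit that the rank-one matrices $u_iu_i^*$ span at most a $d$-dimensional space. My first attempt would be a reduction lemma. It would say that the maximizing $G$ can be replaced by one with at most $d$ distinct lines, for example by merging parallel vectors, since merging increases $\sum|G_{ij}|/N$ appropriately after reweighting. For this I would pass to the weighted quantity $\lambda_{\mathbb K}(U)$ with general $t$ and use a Carathéodory argument on the convex hull of $\{u_iu_i^*\}$ in dimension $d$. If that fails, the fallback is a direct Lagrange-multiplier/Gram-positivity inequality for the lifted vectors $|G_{ij}|$ viewed as entries of $G\circ\bar G$, which is positive semidefinite of rank at most $d$. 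One would apply the Theorem~\ref{SF} argument to $G\circ\bar G$ instead of $G$.
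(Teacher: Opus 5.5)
\textbf{There is a genuine gap.} You correctly see that second moments cannot suffice and that a fourth-moment (design-type) inequality must enter. But the proposal never turns such an inequality into an upper bound on $\sum_{i,j}t_it_j|G_{ij}|$, and that step is essentially the whole proof. Theorem~\ref{design} gives a \emph{lower} bound on $\sum_{i,j}w_iw_j|\langle\hat u_i,\hat u_j\rangle|^4$. It is therefore useful only if the fourth moment enters with a negative sign in an upper estimate of the first moment. Your weighted Cauchy--Schwarz step does the opposite: it bounds $\sum_{i\ne j}|G_{ij}|$ through second moments of normalized correlations alone, discarding the fourth-moment information. Nothing in the proposal produces the claimed ``average off-diagonal correlation at most $\frac1d$-type'' bound, and the displays before it are left unfinished. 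Also, $\sum_{i,j}G_{ii}G_{jj}|\langle\hat u_i,\hat u_j\rangle|^2$ is just $\sum_{i,j}|G_{ij}|^2=m$; the frame potential of the lifted vectors is the \emph{fourth} moment.

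The missing idea, which the paper uses, is a pointwise polynomial certificate. For $x\in[0,1]$ and $\varphi>0$, since $0\le x+\varphi\le 1+\varphi$,
\[
(x-\varphi)^2\ge\frac{(x^2-\varphi^2)^2}{(1+\varphi)^2},\quad\text{i.e.}\quad 2\varphi x\le\Big(1+\frac{2\varphi^2}{(1+\varphi)^2}\Big)x^2+\varphi^2-\frac{\varphi^4+x^4}{(1+\varphi)^2},
\]
with equality exactly at $x=1$ and $x=\varphi$. Put $x=|\langle\hat u_i,\hat u_j\rangle|$, multiply by $w_iw_j$ with $w_i=t_i\|u_i\|$, and sum. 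Then:
\begin{itemize}
\item the left side becomes $2\varphi\sum t_it_j|G_{ij}|$;
\item the fourth moment is at least $c_2(m,\mathbb K)W^2$, where $W=\sum_i w_i$;
\item the second moment $\sum t_it_j|G_{ij}|^2/(\|u_i\|\|u_j\|)$ is at most $\sum_i t_i^2=1$, by Cauchy--Schwarz and $\sum_j|G_{ij}|^2=\|u_i\|^2$;
\item $W^2\le\operatorname{tr}G=m$ may be used once the coefficient of $W^2$ is nonnegative.
\end{itemize}
Choosing $\varphi=1/\sqrt{m+2}$ (real) or $1/\sqrt{m+1}$ (complex) gives exactly the two constants. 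These are the absolute inner products of the maximal ETFs; the numbers $1/(m+2)$ and $1/(m+1)$ in your proposal are their squares.

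Your lifting idea does supply the fourth-moment input. Take $P_i=\hat u_i\hat u_i^*$ in the $d$-dimensional space of symmetric/Hermitian matrices and $S=\sum_i w_iP_i\langle P_i,\cdot\rangle$. Splitting off the direction of $I$ gives
\[
\sum_{i,j}w_iw_j|\langle\hat u_i,\hat u_j\rangle|^4=\operatorname{tr}S^2\ge\frac{W^2}{m^2}+\frac{(W-W/m)^2}{d-1},
\]
which equals $\frac{3W^2}{m(m+2)}$, resp.\ $\frac{2W^2}{m(m+1)}$, i.e.\ $c_2(m,\mathbb K)W^2$. Without the polynomial step, however, there is no proof.

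Your plans for $N>d$ do not fill this gap. A Carath\'eodory reduction controls only the linear constraint $\sum_i u_iu_i^*=I$. The objective $|\langle u_i,u_j\rangle|=\langle u_iu_i^*,u_ju_j^*\rangle^{1/2}$ is not linear in the lifted matrices, so passing to a Carath\'eodory subfamily with new weights gives no comparison of values. Moreover, the numerics quoted in the paper suggest extremal values attained with $N=11$ vectors for $m=4$ and $N=16$ for $m=5$, i.e.\ $N=d+1$. So a value-preserving reduction to at most $d$ lines should not be expected. The fallback of running the Theorem~\ref{SF} argument on $G\circ\bar G$ bounds $\sum t_it_j|G_{ij}|^2$, not $\sum t_it_j|G_{ij}|$. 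Finally, no case distinction in $N$ is needed: the certificate above bounds $\lambda_{\mathbb K}(m,N)$ uniformly in $N$. Your initial reduction to the quasimaximal constants is fine (the paper works with $\lambda_{\mathbb K}(m,N)$ instead).
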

\noindent Unfortunately, their proof is based on an erroneous lemma, as was pointed out in \cite{CLe}.  Recently, using methods similar to those in \cite{BC}, we have proved the latter.
Observe that the upper bound in Theorem \ref{nMPC} is equal to $\delta_{m, \frac{m(m+1)}{2}} $ in the real case and $\delta_{m, m^2} $ in the complex case. The number of vectors in an ETF cannot exceed $\frac{m(m+1)}{2}$ in the real case and $m^2$ in the complex case (see, e.g., \cite[Theorem 5.10]{FR}). So if there exists an ETF with the maximum possible number of vectors ({\it the maximal ETF}), the upper bound is realized. 

\begin{thm}\label{jakis}
Let $ m>1.$
\begin{enumerate}[i)]
    \item  If there exists a maximal ETF in $\mathbb{R}^m$ then $\lambda_\mathbb{R}(m) = \frac{2}{m+1}\left(1+\frac{m-1}{2}\sqrt{m+2}\right)\,$ 
    \item  If there exists a maximal ETF in $\mathbb{C}^m$ then $ \lambda_\mathbb{C}(m) = \frac{1}{m}\left(1+(m-1)\sqrt{m+1}\right).$ 
\end{enumerate}
\end{thm}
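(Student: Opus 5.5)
The argument combines the upper bound of Theorem~\ref{nMPC} with a matching lower bound obtained from Theorem~\ref{SF} applied to the maximal ETF. We treat the real case; the complex case is identical with $N=m^2$ in place of $N=\frac{m(m+1)}{2}$.

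\emph{Lower bound.} Set $N=\frac{m(m+1)}{2}$ and suppose an ETF$(m,N)$ exists in $\mathbb{R}^m$. By Theorem~\ref{SF} (implication i)$\Rightarrow$iii)),
\[
\lambda_{\mathbb R}(m,N)=\delta_{m,N}.
\]
Every subspace $Y\subset\ell_\infty^{(N)}$ satisfies $\lambda(Y,\ell_\infty^{(N)})\le\lambda(Y)$ by the definition \eqref{DefMAPC}. Taking the supremum over $m$-dimensional $Y$ gives $\lambda_{\mathbb R}(m,N)\le\lambda_{\mathbb R}(m)$. Hence $\lambda_{\mathbb R}(m)\ge\delta_{m,N}$.

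\emph{Evaluating $\delta_{m,N}$.} With $N=\frac{m(m+1)}{2}$ we have
\[
N-1=\frac{(m-1)(m+2)}{2},\qquad N-m=\frac{m(m-1)}{2}.
\]
Therefore
\[
\frac{(N-1)(N-m)}{m}=\frac{(m-1)^2(m+2)}{4},
\]
whose square root is $\frac{m-1}{2}\sqrt{m+2}$. Since $\frac{m}{N}=\frac{2}{m+1}$, we get
\[
\delta_{m,N}=\frac{2}{m+1}\Bigl(1+\frac{m-1}{2}\sqrt{m+2}\Bigr).
\]
In the complex case, $N=m^2$ gives $\frac{m}{N}=\frac1m$ and $(N-1)(N-m)/m=(m-1)^2(m+1)$, so $\delta_{m,m^2}=\frac1m\bigl(1+(m-1)\sqrt{m+1}\bigr)$.

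\emph{Upper bound.} Theorem~\ref{nMPC} states that $\lambda_{\mathbb K}(m)$ is bounded above by exactly these quantities. Combined with the lower bound, this gives the claimed equalities.

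All the depth lies in Theorem~\ref{nMPC}, which we are allowed to assume; the rest is the algebraic check above and the elementary inequality $\lambda_{\mathbb K}(m,N)\le\lambda_{\mathbb K}(m)$.
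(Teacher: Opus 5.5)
Your proposal is correct and is essentially the paper's argument. The paper likewise notes that the bound in Theorem~\ref{nMPC} equals $\delta_{m,m(m+1)/2}$ (resp.\ $\delta_{m,m^2}$), and that a maximal ETF attains it via Theorem~\ref{SF} together with $\lambda_{\mathbb K}(m,N)\le\lambda_{\mathbb K}(m)$; you have written out the algebra and the comparison step that the paper leaves implicit.
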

There are numerous examples of complex maximal ETFs, for example, for $m \in\{1,\ldots, 53\}$ (see, e.g., \cite{FM1}). 
In fact, it is conjectured that there is a complex maximal ETF in every dimension (Zauner's conjecture \cite{Z}).  In view of Theorem \ref{jakis}, Zauner's Conjecture implies the following hypothesis.  
\begin{conjecture}
For every $m\geq1$
$$
\lambda_\mathbb{C}(m)=\frac{1}{m}\left(1+(m-1)\sqrt{m+1}\right).
$$
\end{conjecture}
Unlike in the complex case, real maximal ETFs are rare objects. The only known cases are for $m$ equal to $2,$ $3,$ $7$ and $23.$ Therefore, we have the following.
\begin{thm}\label{MPC}
~
\begin{itemize}
    \item $\lambda_\mathbb{R}(2)=\frac{4}{3}.$ 
    \item $\lambda_\mathbb{R}(3)=\frac{1+\sqrt{5}}{2};$ 
    \item $\lambda_\mathbb{R}(7)=\frac{5}{2};$
    \item $\lambda_\mathbb{R}(23)=\frac{14}{3}.$
    \end{itemize}
    \end{thm}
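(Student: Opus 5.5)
The plan is to deduce each value directly from Theorem~\ref{jakis}~i). For each $m\in\{2,3,7,23\}$ we exhibit a real maximal ETF, i.e.\ an equiangular tight frame of $N=\frac{m(m+1)}{2}$ unit vectors in $\mathbb{R}^m$. Then we evaluate the bound $\frac{2}{m+1}\bigl(1+\frac{m-1}{2}\sqrt{m+2}\bigr)$, which is the common value of $\delta_{m,m(m+1)/2}$ and the upper bound of Theorem~\ref{nMPC}.

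The existence step uses standard configurations, listed case by case.
\begin{enumerate}[i)]
\item For $m=2$, $N=3$, take three unit vectors at angles $0,\ 2\pi/3,\ 4\pi/3$ (the Mercedes--Benz frame). The inner products all have absolute value $\frac12$, and the frame is tight by rotational symmetry.
\item For $m=3$, $N=6$, take the six lines through antipodal vertices of the regular icosahedron. The absolute inner products are $\frac1{\sqrt5}$, in agreement with \eqref{Welch Bound}, and the frame is tight because the icosahedral group acts irreducibly on $\mathbb{R}^3$.
\item For $m=7$, $N=28$, take the $28$ equiangular lines of $E_7$ (or of the $E_8$ root system), with angle $\frac13$.
\item For $m=23$, $N=276$, take the $276$ equiangular lines from the Leech lattice / Conway group $\mathrm{Co}_3$, with angle $\frac15$.
\end{enumerate}
In each case the absolute inner product matches $\sqrt{\frac{N-m}{m(N-1)}}$. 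Tightness follows either from irreducibility of the symmetry group, or from the general fact that a maximal set of $\frac{m(m+1)}{2}$ equiangular lines automatically forms a tight frame (indeed a spherical 2-design). These existence facts are classical (Lemmens--Seidel, and \cite{FR}), so we would cite them rather than reprove them.

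The arithmetic step is routine:
\begin{itemize}
\item $m=2$: $\frac23\bigl(1+\frac12\cdot2\bigr)=\frac43$.
\item $m=3$: $\frac12\bigl(1+\sqrt5\bigr)$.
\item $m=7$: $\frac14\bigl(1+3\cdot3\bigr)=\frac52$.
\item $m=23$: $\frac1{12}\bigl(1+11\cdot5\bigr)=\frac{56}{12}=\frac{14}{3}$.
\end{itemize}

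The main obstacle is not in this deduction but in its ingredients. First, one needs the upper bound of Theorem~\ref{nMPC} for the real case, which the paper takes as proved in \cite{BB2}. Second, one needs the existence of the maximal ETFs in dimensions $7$ and $23$. For these two cases I would simply cite the known constructions; should a self-contained check be wanted, I would verify tightness via the irreducibility argument.
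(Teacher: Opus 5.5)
Your proposal is correct and takes the same route as the paper, which obtains Theorem~\ref{MPC} by applying Theorem~\ref{jakis}~i) to the known real maximal ETFs in dimensions $2,3,7,23$. That theorem in turn combines the lower bound $\lambda_\mathbb{R}(m)\ge\lambda_\mathbb{R}(m,N)=\delta_{m,N}$ from Theorem~\ref{SF} with the upper bound of Theorem~\ref{nMPC}; your examples, Welch-bound angles $\frac12,\frac1{\sqrt5},\frac13,\frac15$, and arithmetic all check out.
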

Many of the community believe these are all real cases where maximal ETFs exist.
Since real maximal ETFs exist only in a handful of dimensions, it is natural to seek more general structures. In the remainder of this paper, we investigate biangular tight frames, and spherical designs as candidates for extremal configurations, which naturally arise in this context.

\section{ Biangular tight frames.}
A system of  vectors $U=(u_j)_{j=1}^N$ with equal norms in $\mathbb{K}^m$ is called {\it biangular} if there exist $b>c\geq 0$ such that 
   $$
   |\langle u_i, u_j\rangle|\in\{b,c\}\;\;\; \textrm{ for all }\; i\neq j\in[1:N]. 
   $$
As it was shown in \cite[Proposition 5.1]{Cas} every biangular, tight frame for $\mathbb{K}^m$ is equidistributed i.e., there exist $k\in[1:N]$ such that
$$
\#\{j\in[1:N]:\; |\langle u_i,\; u_j\rangle|=b\}=k \;\;\; \textrm{ for all }\; i\in[1:N].
$$
The above equidistribution property implies that biangular tight frames share an important property with ETFs: namely, the quantities $\mu_{\mathbb K}(U)$ and $\lambda_{\mathbb K}(U)$ coincide.
\begin{lemma}\label{biangularmu}
Let $U\in \mathbb{K}^{m\times N}$ be a biangular Parseval tight frame. Then $$\lambda_{\mathbb{K}}(U)=\mu_{\mathbb{K}}(U).$$
\end{lemma}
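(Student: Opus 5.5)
The plan is to identify $\lambda_{\mathbb K}(U)$ with the largest eigenvalue of the nonnegative symmetric matrix $A=|U^*U|$, whose entries are $A_{ij}=|\langle u_i,u_j\rangle|$. The key point is that equidistribution makes $A$ have constant row sums. By definition,
\[
\lambda_{\mathbb K}(U)=\max\Big\{\sum_{i,j}t_it_jA_{ij}:\ t\in\mathbb R_+^N,\ \|t\|_2=1\Big\}.
\]
Since $A$ has nonnegative entries, $t^{T}At\le |t|^{T}A|t|$ for every real unit vector $t$. Hence the maximum over $t\in\mathbb R^N_+$ equals the maximum of the Rayleigh quotient over all real unit vectors, and this is the largest eigenvalue $\rho(A)$ of the real symmetric matrix $A$.

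Next I compute the row sums of $A$. Because $U$ is a biangular tight frame, its vectors have equal norms, and $UU^*=\mathrm I_m$ gives $\sum_i\|u_i\|^2=\operatorname{tr}(U^*U)=m$. So every diagonal entry of $A$ equals $a=m/N$. By the equidistribution result \cite[Proposition 5.1]{Cas}, there is a $k$ such that each row of $A$ contains exactly $k$ off-diagonal entries equal to $b$. The remaining $N-1-k$ off-diagonal entries equal $c$. Every row sum is therefore $s=a+kb+(N-1-k)c$, independent of the row, which means $A\mathbf 1=s\mathbf 1$ with $\mathbf 1=(1,\dots,1)^T$.

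The last step is to show that $s=\rho(A)$. Since $A\ge 0$ entrywise, $\rho(A)\le\|A\|_\infty=\max_i\sum_j A_{ij}=s$. In the other direction, $s$ is an eigenvalue, so $\rho(A)\ge s$. Thus $\lambda_{\mathbb K}(U)=s$, and the maximum is attained at $t=\mathbf 1/\sqrt N$. This gives
\[
\lambda_{\mathbb K}(U)=\frac1N\mathbf 1^{T}A\mathbf 1=\frac1N\sum_{i,j}A_{ij}=\mu_{\mathbb K}(U).
\]

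The only substantive ingredient is the equidistribution property, which is taken from \cite{Cas}. One point needs care: whether that proposition applies to the normalization used here, namely an equal-norm Parseval frame rather than unit vectors. Rescaling the vectors by a common factor does not change biangularity or the counts $k$, so the proposition transfers directly.
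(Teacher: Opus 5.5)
Your proof is correct and is essentially the paper's argument. Equidistribution from \cite{Cas} makes every row of $|U^*U|$ sum to $a+kb+(N-k-1)c=\mu_{\mathbb K}(U)$; the quadratic form is bounded by this common row sum; and $t=\frac{1}{\sqrt N}(1,\dots,1)$ attains it. The normalization issue you flag is harmless, since equidistribution also follows directly from $\sum_j|\langle u_i,u_j\rangle|^2=\|u_i\|^2$ and $b^2\neq c^2$.

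The paper gets the upper bound from the termwise estimate $t_it_j\le\frac12(t_i^2+t_j^2)$ and counting. That is just the elementary form of your bound $\lambda_{\max}(A)\le\|A\|_\infty$ for a symmetric nonnegative matrix, so the two proofs differ only in packaging.
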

{\sc Proof.} Since $U$ is a biangular tight frame, there exist $a,b,c\in \mathbb{R}$, 
such that $(U^*U)_{ii}=a$ for $i \in [1:N] $ and in every row of $|U^*U|$ exactly $k$ entries is equal to $b$ and exactly $N-k-1$ entries is equal to $c$ for some $k\in[1:N-2].$ Let us define
$$
J^{b}:=\{(i,j)\in[1:N]^2: |U^*U|_{ij}=b \textrm{ and } i\neq j\}
$$
and
$$
J_l^b:=\{(i,j)\in J^b:  i= l \;\;{\rm or }\;\; j=l\},
$$
where $l\in [1:N].$ 
Since $|U^*U|$ is symmetric, we have 
$$
\#J_l^b=\#\{(l,j)\in J^b\}+\#\{(i,l)\in J^b\}=2\#\{(l,j)\in J^b\}=2k.
$$
Analogously, define the sets $J^c,J^c_l.$  Then
$$\#J_l^c=2(N-k-1).$$
Consequently, for every unit vector $t\in \mathbb{R}^N_+$
\begin{align*}
    \sum_{i,j=1}^Nt_it_j|U^*U|_{ij}&= a\sum_{i=1}^N t_i^2+
    b\sum_{\substack{ \\ (i,j)\in J^b}}t_it_j + c\sum_{\substack{ \\ (i,j)\in J^c}}t_it_j\\
    &\leq
    a+
    \tfrac{1}{2}b\sum_{\substack{ (i,j)\in J^b}}(t_i^2+t_j^2) + \tfrac{1}{2}c\sum_{\substack{ \\ (i,j)\in J^c}}(t_i^2+t_j^2) \\
    &= a+ \tfrac{1}{2}b\sum_{l=1}^N\sum_{(i,j)\in J_l^b} t_l^2 +\tfrac{1}{2}c\sum_{l=1}^N\sum_{(i,j)\in J_l^c} t_l^2\\
    &= a+ kb\sum_{l=1}^N t_l^2 +(N-k-1)c\sum_{l=1}^N t_l^2\\
    &= a+ kb +(N-k-1)c=\mu_\mathbb{K}(U),
\end{align*}
Taking the supremum over all $t\in\mathbb R_+^N$ with $\|t\|_2=1$ yields
\[
\lambda_{\mathbb K}(U)\le \mu_{\mathbb K}(U).
\]
The reverse inequality is immediate from the definitions by taking
$t=\frac1{\sqrt N}(1,\ldots,1)$.
Hence,
\[
\lambda_{\mathbb K}(U)=\mu_{\mathbb K}(U),
\]
as required.\\
~\\
Observe that the sequence $(\lambda_{\mathbb K}(m,N))_{N\ge m}$ is nondecreasing with respect to $N$, and
\[
\lambda_{\mathbb K}(m)=\sup_{N\ge m}\lambda_{\mathbb K}(m,N),
\]
Moreover, in all currently known real cases where the maximal absolute projection constant is attained, the corresponding maximal relative projection constant is realized by a maximal equiangular tight frame with $N=\frac{m(m+1)}{2}$ vectors. This provides a strong motivation for studying highly structured systems containing at least this many vectors. In particular, the midpoints of a regular simplex form a biangular tight frame with exactly $\frac{m(m+1)}{2}$ vectors, making them natural candidates for extremal configurations.

\begin{example}
Let $m\geq 3.$ The midpoints of the edges of a regular simplex in $\mathbb{R}^m$ can be given by
\begin{align*}
    v_{i,k}&:=\frac{1}{\sqrt{m-1}}\left(e_i+e_k-\frac{2}{m}\left(1+\frac{1}{\sqrt{m+1}}\right)\mathbb{1}_m\right) \;\; \textrm{ for }\;\; i<k\in[1:m]\\
    v_{i,i}&:=\frac{1}{\sqrt{m-1}}\left(e_i + \frac{1}{m}\left(\frac{m-1}{\sqrt{m+1}}-1\right)\mathbb{1}_m\right) \;\; \textrm{ for }\;\; i\in[1:m]
\end{align*}
\end{example}
where $\{e_i\}_{i=1}^m$ is the canonical basis of $\mathbb{R}^m,$ and $\mathbb{1}_m\in \mathbb{R}^m$ is the vector whose entries are all equal to $1.$
The following properties can be verified by direct computation. In particular, all vectors have the same norm, and the absolute values of their pairwise inner products take only two distinct values. More precisely,
\begin{align*}
    \|v_{i,k}\|^2&=\frac{2}{m+1} \;\;\textrm{ for }\;\; i\leq k\in[1:m]\\
    |\langle v_{i,i},v_{j,j}\rangle| &= \frac{m-3}{m^2-1} \;\;\textrm{ for }\;\; i\neq j\in[1:m]\\
     |\langle v_{i,k},v_{j,l}\rangle| &= \frac{m-3}{m^2-1} \;\;\textrm{ for }\;\; \# \{i,k\}\cap\{j,l\}=1 \;\textrm{ and }\; (i,k)\neq(j,l)\\
     |\langle v_{i,k},v_{j,l}\rangle| &= \frac{4}{m^2-1} \;\;\textrm{ for other cases. }
\end{align*}
Furthermore, $\sum_{i\leq k}\|v_{i,k}\|^2 =m$ and
\begin{align*}
\sum_{i\leq k}\sum_{j\leq l}|\langle v_{i,k},v_{j,l}\rangle|^2&=\frac{m(m+1)}{2}\cdot\frac{4}{(m+1)^2}+m(m^2-1)\cdot\frac{(m-3)^2}{(m^2-1)^2}\\
&+\frac{m(m^2-1)(m-2)}{4}\cdot\frac{16}{(m^2-1)^2}=m=\frac{1}{m}\left(\sum_{i\leq j}\|v_{i,k}\|^2\right)^2.   
\end{align*}
   Consequently, the family $V_m=(v_{i,k})\in\mathbb{R}^{m\times\frac{m(m+1)}{2}}$ is a biangular Parseval frame, and by Lemma \ref{biangularmu}
$$
\lambda_{\mathbb{R}}(V_m)=\mu_{\mathbb{R}}(V_m)=\frac{4(m-2)}{m+1}.
$$
As an immediate consequence of the preceding example, we obtain the following lower bound for the maximal absolute projection constant.
\begin{cor}
For every $m\geq 3$ we have
$$\lambda_{\mathbb{R}}(m)\geq \frac{4(m-2)}{m+1}.$$
\end{cor}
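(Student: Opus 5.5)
The corollary follows by chaining the definitions with the example. I would first recall that $\lambda_{\mathbb R}(m)=\sup_{N\ge m}\lambda_{\mathbb R}(m,N)\ge \lambda_{\mathbb R}\bigl(m,\tfrac{m(m+1)}{2}\bigr)$. This uses $\tfrac{m(m+1)}{2}\ge m$, which holds for $m\ge 1$. By Theorem~\ref{lammbda}, $\lambda_{\mathbb R}(m,N)$ is the supremum of $\lambda_{\mathbb R}(U)$ over Parseval frames $U\in\mathbb R^{m\times N}$. So it suffices to exhibit one Parseval frame with $N=\tfrac{m(m+1)}{2}$ vectors whose $\lambda_{\mathbb R}(U)$ equals $\tfrac{4(m-2)}{m+1}$. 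The family $V_m$ of edge midpoints is that frame.

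Next I would verify that $V_m$ is a biangular Parseval frame. The listed norms and inner products are routine expansions using $\langle e_i,\mathbb 1_m\rangle=1$ and $\|\mathbb 1_m\|^2=m$. For the Parseval property, I would use the identity displayed in the example, $\sum|\langle v,w\rangle|^2=\frac1m(\sum\|v\|^2)^2$, together with $\sum\|v\|^2=m$. Equality here is exactly the case of equality in $\operatorname{tr}(S^2)\ge \frac1m(\operatorname{tr}S)^2$ for the frame operator $S=V_mV_m^*$. Equality forces $S$ to be a scalar multiple of $\mathrm I_m$, and the trace condition gives $S=\mathrm I_m$. Biangularity holds as long as the two values $\tfrac{|m-3|}{m^2-1}$ and $\tfrac{4}{m^2-1}$ are the only off-diagonal moduli. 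This may degenerate to a single value, which happens when $m=7$ and $|m-3|=4$, or to a zero value when $m=3$. Either case is harmless, since the argument of Lemma~\ref{biangularmu} only needs constant row counts of each value.

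Then Lemma~\ref{biangularmu} gives $\lambda_{\mathbb R}(V_m)=\mu_{\mathbb R}(V_m)$, and it remains to compute $\mu_{\mathbb R}(V_m)$, which is the step needing care. I will sum the entries of one row of $|V_m^*V_m|$ and use equidistribution. The diagonal entry contributes $\tfrac{2}{m+1}$. The entries with value $\tfrac{m-3}{m^2-1}$ number $2(m-2)$ in each row: for an edge $\{i,k\}$ these are the edges sharing exactly one vertex, and for a vertex-type vector $v_{i,i}$ there is an analogous count. The remaining $\tfrac{m(m+1)}{2}-1-2(m-2)$ entries have value $\tfrac{4}{m^2-1}$.

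I expect the main obstacle to be checking that these counts are genuinely identical for both types of vectors ($i<k$ and $i=i$). The counts can be cross-checked against the totals used in the example's Parseval computation: $m(m^2-1)=N\cdot 2(m-2)$ ordered pairs of the first kind, and $\tfrac{m(m^2-1)(m-2)}{4}$ ordered pairs of the second kind. With the counts confirmed, the row sum for $m>3$ is
\[
\frac{2}{m+1}+\frac{2(m-2)(m-3)}{m^2-1}+\frac{(m-2)(m-1)m}{2}\cdot\frac{4}{m^2-1}\cdot\frac{1}{m}\cdot\frac{m+1}{m+1},
\]
and simplifying it gives $\tfrac{4(m-2)}{m+1}$, matching the value stated in the example. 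Since $\mu$ is the average row sum, this gives $\mu_{\mathbb R}(V_m)=\tfrac{4(m-2)}{m+1}$ and completes the chain $\lambda_{\mathbb R}(m)\ge\lambda_{\mathbb R}(V_m)=\tfrac{4(m-2)}{m+1}$.
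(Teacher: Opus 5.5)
Your route is the paper's own. The corollary is read off from the example via Lemma~\ref{biangularmu} and the chain $\lambda_{\mathbb R}(m)\ge\lambda_{\mathbb R}\bigl(m,\tfrac{m(m+1)}{2}\bigr)\ge\lambda_{\mathbb R}(V_m)$. Your extra justifications are also correct: the Parseval property via equality in ${\rm tr}(S^2)\ge\frac1m({\rm tr}\,S)^2$, and the remark that $m=3$ and $m=7$ are harmless because the lemma only uses constant row counts.

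The step you flagged as delicate, however, is wrong as written. The $N=\binom{m+1}{2}$ vectors correspond to the edges of a simplex with $m+1$ vertices, with $v_{i,i}$ being the edges through the extra vertex. An edge $\{a,b\}$ meets $m-1$ other edges at $a$ and $m-1$ at $b$. So each row of $|V_m^*V_m|$ has $2(m-1)$ entries equal to $\frac{m-3}{m^2-1}$, not $2(m-2)$.

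For $v_{i,k}$ with $i<k$ you counted only the $2(m-2)$ vectors $v_{j,l}$, $j<l$, sharing one index with $\{i,k\}$, and forgot $v_{i,i}$ and $v_{k,k}$. For $v_{i,i}$ the neighbours are the $m-1$ vectors $v_{j,j}$, $j\ne i$, together with the $m-1$ vectors $v_{j,l}$, $j<l$, with $i\in\{j,l\}$. Your proposed cross-check would have exposed this, since $N\cdot 2(m-2)=m(m+1)(m-2)\ne m(m^2-1)=N\cdot 2(m-1)$.

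Your counts are also inconsistent with each other. The text assigns $N-1-2(m-2)=\frac{(m-1)(m-2)}{2}+2$ entries to the value $\frac{4}{m^2-1}$, while your display uses $\frac{(m-1)(m-2)}{2}$. So two entries per row disappear. The displayed expression actually equals $\frac{4(m-2)}{m+1}-\frac{2(m-3)}{m^2-1}$, which is not the target for $m>3$.

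The repair is immediate. With the counts $1$, $2(m-1)$ and $\binom{m-1}{2}$, which do add up to $N$, every row sum is
\[
\frac{2}{m+1}+2(m-1)\cdot\frac{m-3}{m^2-1}+\frac{(m-1)(m-2)}{2}\cdot\frac{4}{m^2-1}=\frac{2+2(m-3)+2(m-2)}{m+1}=\frac{4(m-2)}{m+1}.
\]
Hence $\mu_{\mathbb R}(V_m)=\frac{4(m-2)}{m+1}$. This agrees with the pair counts $m(m^2-1)$ and $\frac{m(m^2-1)(m-2)}{4}$ in the paper's Parseval computation, and the rest of your argument goes through unchanged.
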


\begin{figure}[H]
\begin{center}
\includegraphics[width=8cm]{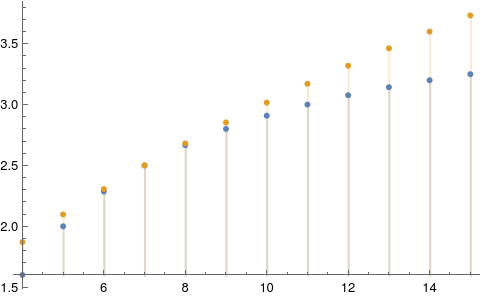}   
\caption{ Yellow dots show upper bound for $\lambda_{\mathbb{R}}(m),$ blue ones show value of $\lambda_{\mathbb{R}}(V_m)$}\label{wykr} 
\end{center}
\end{figure}
For $m=7,$ these vectors form the maximal equiangular tight frame and $\lambda_{\mathbb{R}}(V_7)=\lambda_{\mathbb{R}}(7).$ Moreover, as illustrated in Figure~\ref{wykr}, the gap between $\lambda_{\mathbb{R}}(V_m)$ and the upper bound from Theorem~\ref{nMPC} is very small for $m=6$ and $m=8$. These values also agree with the numerical estimates obtained by B.~L.~Chalmers and, more recently, by V.~Sivashankar, Q.~Tang, and T.~Wakhare (see \cite{STW}), providing further evidence for the following conjecture.
\begin{conjecture}
~
 \begin{itemize}
    \item $\lambda_\mathbb{R}(6)=\frac{16}{7}.$ 
    \item $\lambda_\mathbb{R}(8)=\frac{8}{3};$ 
    \end{itemize}
   
\end{conjecture}
\section{Spherical $(2,2)$-designs.}
There are several equivalent characterizations of spherical designs (see, e.g., \cite{Ba,DGS}). In what follows, we consider weighted spherical $(t,t)$-designs. For our purposes, it is convenient to define them as configurations of unit vectors ($U=(u_j)_{j=1}^N$), together with nonnegative weights ($w=(w_j)_{j=1}^N$), for which equality holds in the following theorem.
   
\begin{thm}[\cite{HW}]\label{design}
 Let    $U=(u_j)_{j=1}^N$ be a sequence of unit vectors in  $\mathbb{K}^m,$ and $(w_j)_{j=1}^N $ be nonnegative weights, i.e.,  $w_j\geq 0,$ $\sum_{j=1}^N w_j=1.$ Then 
\begin{equation}
    \sum_{i,j=1}^N w_iw_j|\langle u_i, u_j\rangle |^{2t}\geq c_t(m,\mathbb{K}),
\end{equation}
where
\begin{equation*}
    c_t(m,\mathbb{R})=\frac{1\cdot 3 \cdot 5 \cdots (2t-1)}{m(m+2)\cdots(m+2(t-1))},\;\;
     c_t(m,\mathbb{C})=\frac{1}{\binom{m+t-1}{t} }.
\end{equation*}
\end{thm}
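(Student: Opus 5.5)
The approach is to linearize the $2t$-th power through symmetric tensors and then apply Cauchy--Schwarz against the identity on the symmetric subspace. For a vector $u\in\mathbb K^m$, let $u^{\otimes t}\in(\mathbb K^m)^{\otimes t}$ denote its $t$-fold tensor power. Then
\[
|\langle u_i,u_j\rangle|^{2t}=\bigl|\langle u_i^{\otimes t},u_j^{\otimes t}\rangle\bigr|^{2}
=\operatorname{tr}\bigl(A_iA_j\bigr),
\qquad A_i:=u_i^{\otimes t}(u_i^{\otimes t})^{*}.
\]
Set $A:=\sum_j w_jA_j$, a positive semidefinite operator. With this notation the left-hand side becomes $\sum_{i,j}w_iw_j\operatorname{tr}(A_iA_j)=\operatorname{tr}(A^2)=\|A\|_{HS}^2$. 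Since each $u_i$ is a unit vector and $\sum_j w_j=1$, we also have $\operatorname{tr}A=1$.

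Every $A_j$ is supported on the symmetric subspace $S:=\operatorname{Sym}^t(\mathbb K^m)$, so $A=PAP$, where $P$ is the orthogonal projection onto $S$. Cauchy--Schwarz in the Hilbert--Schmidt inner product gives
\[
1=(\operatorname{tr}A)^2=\langle A,P\rangle_{HS}^2\le \|A\|_{HS}^2\,\|P\|_{HS}^2=\operatorname{tr}(A^2)\,\dim S .
\]
In the complex case $\dim S=\binom{m+t-1}{t}$, which yields exactly $c_t(m,\mathbb C)$. Equality holds precisely when $A=P/\dim S$, which is the weighted $(t,t)$-design condition.

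In the real case this argument is too weak. Over $\mathbb R$ the dimension $\binom{m+t-1}{t}$ gives a bound smaller than $\frac{(2t-1)!!}{m(m+2)\cdots(m+2t-2)}$, because $A$ cannot be proportional to the projection onto real symmetric tensors. The reason is that real rank-one tensors $u^{\otimes 2t}$ span only the fully symmetric part of the space. I would therefore replace $P$ by a better test operator. Let $B:=\int_{S^{m-1}}x^{\otimes t}(x^{\otimes t})^{*}\,d\sigma(x)$ be the averaged operator over the uniform measure. Then $\langle A,B\rangle_{HS}=\sum_j w_j\int\langle u_j,x\rangle^{2t}\,d\sigma(x)=c_t(m,\mathbb R)$. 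The last equality uses the standard moment $\int_{S^{m-1}}\langle u,x\rangle^{2t}\,d\sigma=\frac{(2t-1)!!}{m(m+2)\cdots(m+2t-2)}$, computed via Gaussian integrals. By the same orthogonal-invariance computation, $\|B\|_{HS}^2=\int\!\!\int\langle x,y\rangle^{2t}=c_t(m,\mathbb R)$.

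The main point is that Cauchy--Schwarz in the form $\langle A,B\rangle^2\le\|A\|^2\|B\|^2$ only gives $\operatorname{tr}A^2\ge c_t$ after a further step. The cleaner route is to expand
\[
0\le \|A-B\|_{HS}^2=\operatorname{tr}A^2-2\langle A,B\rangle+\|B\|^2=\operatorname{tr}A^2-c_t ,
\]
using the two identities above. This argument works uniformly over $\mathbb R$ and $\mathbb C$, taking the complex sphere moment $1/\binom{m+t-1}{t}$ in the complex case. So the plan is to present the $\|A-B\|^2\ge0$ argument directly. The only real work is computing the sphere moments $\int\langle u,x\rangle^{2t}d\sigma$ and $\int|\langle u,x\rangle|^{2t}d\sigma$. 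These follow from the Gaussian factorization $\mathbb E|g_1|^{2t}=\mathbb E\|g\|^{2t}\cdot\int|x_1|^{2t}d\sigma$. This computation is the step I expect to be the main obstacle, though it is classical.
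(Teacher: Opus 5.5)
Your argument is correct. $\operatorname{tr}(A^2)$ is exactly the left-hand side. Both $\langle A_j,B\rangle_{HS}=c_t$ for every unit $u_j$ and $\|B\|_{HS}^2=c_t$ follow from invariance of the surface measure, which must be normalized to total mass $1$. The Gaussian factorization you sketch gives exactly $\frac{(2t-1)!!}{m(m+2)\cdots(m+2t-2)}$ and $1/\binom{m+t-1}{t}$.

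The paper, however, gives no proof of this statement. It is quoted from Hughes--Waldron \cite{HW} and used only as a black box, with $t=2$, in the shortened proof of the K\"onig--Tomczak-Jaegermann bound. Your route therefore differs only in being self-contained: it is the classical Welch/Sidelnikov tensor argument.

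What it buys beyond the citation is a transparent equality case. Equality holds iff $A=B$, i.e. $\sum_j w_j u_j^{\otimes t}(u_j^{\otimes t})^*=\int x^{\otimes t}(x^{\otimes t})^*\,d\sigma(x)$. This is the usual cubature definition of a weighted $(t,t)$-design, so it justifies the paper's choice to define such designs as the equality cases of this inequality.

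Two small remarks.
\begin{enumerate}[i)]
\item Your separate complex Cauchy--Schwarz argument is not a different proof. In the complex case $B=P/\dim S$, because the unitary group acts irreducibly on $\operatorname{Sym}^t(\mathbb C^m)$. In the real case, $\operatorname{Sym}^t(\mathbb R^m)$ splits into harmonic components under the orthogonal group, which is the precise reason $B$ is not a multiple of $P$ there.
\item Cauchy--Schwarz against $B$ already suffices. Since $c_t^2=\langle A,B\rangle^2\le \operatorname{tr}(A^2)\,c_t$ and $c_t>0$, the ``further step'' you mention is just a division. The expansion of $\|A-B\|_{HS}^2$ is equivalent, though it displays the equality case more directly.
\end{enumerate}
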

Using the above theorem for $t=2$, we can shorten the proof of Theorem \ref{nMPC} given in the paper \cite{BB2}.
\begin{thm}
Let $1<m\leq N.$ Then the following inequalities hold
$$
\lambda_\mathbb{R}(m,N)\leq \frac{2}{m+1}\left(1+\frac{m-1}{2}\sqrt{m+2}\right)
$$
$$
\lambda_\mathbb{C}(m,N)\leq \frac{1}{m}\left(1+(m-1)\sqrt{m+1}\right)
$$
\end{thm}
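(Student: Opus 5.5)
By Theorem~\ref{lammbda}, it suffices to fix a Parseval frame $U=(u_1,\dots,u_N)$ in $\mathbb K^m$ and a vector $t\in\mathbb R_+^N$ with $\|t\|_2=1$, and to bound
\[
S:=\sum_{i,j=1}^N t_it_j|\langle u_i,u_j\rangle|
\]
by the right-hand side. The strategy is to reduce $S$ to a weighted sum over unit vectors, split off the diagonal, and control the off-diagonal part by Cauchy--Schwarz and Theorem~\ref{design} with $t=2$. Discard the indices with $u_i=0$ and write $u_i=\|u_i\|\,x_i$ with $\|x_i\|=1$. Put $s_i:=t_i\|u_i\|\ge0$. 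Then
\[
S=\sum_{i,j}s_is_j|\langle x_i,x_j\rangle| .
\]
The Parseval condition gives $\sum_i\|u_i\|^2=\operatorname{tr}(UU^*)=m$. It also gives $\sum_{i,j}|\langle u_i,u_j\rangle|^2=\operatorname{tr}((U^*U)^2)=m$, although I expect only the first identity to be needed.

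The key observation concerns $\sigma:=\sum_i s_i^2$, which is exactly the diagonal part $\sum_i t_i^2\|u_i\|^2$. Since $t$ is a unit vector and $\|u_i\|\le1$ for a Parseval frame, we have $\sigma\le1$. Set $w_i:=s_i^2/\sigma$, so that $\sum_i w_i=1$. Write $S=\sigma+T$, where
\[
T=\sum_{i\ne j}s_is_j|\langle x_i,x_j\rangle| .
\]

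Apply Cauchy--Schwarz to $T$:
\[
T\le\Big(\sum_{i\ne j}s_is_j\Big)^{1/2}\Big(\sum_{i\ne j}s_is_j|\langle x_i,x_j\rangle|^2\Big)^{1/2}.
\]
For the first factor, $\sum_{i\ne j}s_is_j=(\sum_i s_i)^2-\sigma$. Using Cauchy--Schwarz together with $\sum_i t_i^2=1$, we get $(\sum_i s_i)^2\le\sum_i\|u_i\|^2=m$. Hence the first factor is at most $(m-\sigma)^{1/2}$.

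For the second factor, observe that $s_is_j|\langle x_i,x_j\rangle|^2=|\langle u_i,u_j\rangle|\,t_it_j\,|\langle x_i,x_j\rangle|$. This form looks awkward, so I would instead return to $S$ itself: apply Cauchy--Schwarz to the full sum with weights $s_is_j$, and handle the pieces with the design inequality. Theorem~\ref{design} gives
\[
\sum_{i,j}w_iw_j|\langle x_i,x_j\rangle|^4\ge c_2 ,
\]
which I would like to combine with a positivity/trace argument. An alternative route is the following. The Gram matrix $G=(s_is_j\langle x_i,x_j\rangle)$ has trace $\sigma$ and rank at most $m$. Therefore
\[
\sum_{i,j}s_i^2s_j^2|\langle x_i,x_j\rangle|^2=\|G\|_F^2\ge\frac{\sigma^2}{m},
\]
and the design inequality sharpens this to $\sigma^2c_2$ when the fourth powers are used. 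I suspect the cleanest chain is Cauchy--Schwarz applied to $T$ with the off-diagonal count. One then needs an \emph{upper} bound on $\sum s_is_j|\langle x_i,x_j\rangle|^2$ in terms of $\sigma$, and this follows from the frame operator $\sum_i s_i^2x_ix_i^*$ being dominated appropriately.

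The hardest step will be turning the lower bound of Theorem~\ref{design} into an upper bound on $S$. The sign is reversed, so the design inequality must enter by bounding the square of the diagonal/rank defect. Concretely, I would use the frame operator $A=\sum_i w_ix_ix_i^*$, which satisfies $\operatorname{tr}A=1$. Then
\[
\sum_{i,j}w_iw_j|\langle x_i,x_j\rangle|^2=\operatorname{tr}A^2 ,
\]
and I would show $S\le\sigma+\sqrt{(m-\sigma)(\ldots)}$, where the bracket is $\le\sigma(1-\sigma\cdot\text{const})$. Optimizing over $\sigma\in(0,1]$ should give exactly $\delta_{m,m(m+1)/2}$ in the real case, since $c_2(m,\mathbb R)=3/(m(m+2))$, and $\delta_{m,m^2}$ in the complex case, since $c_2(m,\mathbb C)=2/(m(m+1))$. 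The final step is the one-variable maximization of $\sigma+\sqrt{(m-\sigma)(\alpha\sigma-\beta\sigma^2)}$, matching the constants to the ETF Welch values.
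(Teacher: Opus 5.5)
There is a genuine gap. The setup is sound: the reduction via Theorem~\ref{lammbda}, the normalization $u_i=\|u_i\|x_i$, $s_i=t_i\|u_i\|$, and the bound $(\sum_i s_i)^2\le m$. But the decisive step is never supplied: turning the \emph{lower} bound of Theorem~\ref{design} into an \emph{upper} bound on $S$. The route you sketch cannot supply it.

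After Cauchy--Schwarz on $T$, the only control of the second factor is the tightness bound $\sum_{i,j}s_is_j|\langle x_i,x_j\rangle|^2\le 1$, which follows from Cauchy--Schwarz and $\sum_j|\langle u_i,u_j\rangle|^2=\|u_i\|^2$. So your bracket is $1-\sigma$. Since $\sigma+\sqrt{(m-\sigma)(1-\sigma)}$ is decreasing in $\sigma$, this only recovers the Kadec--Snobar bound $\sqrt m$.

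The bound you hope for, bracket $\le\sigma(\alpha-\beta\sigma)$, is false. Take an equal-norm Parseval frame with $N$ vectors and $t=N^{-1/2}(1,\dots,1)$. Then $\sigma=m/N\to0$, while the off-diagonal second moment equals $1-m/N\to1$. A lower bound on fourth moments cannot yield an upper bound on second moments.

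Your weights are also mismatched. The design inequality must be applied with $w_i\propto s_i$, not $s_i^2$, so that it reads $\sum_{i,j}s_is_j|\langle x_i,x_j\rangle|^4\ge c_2(m,\mathbb K)\,(\sum_i s_i)^2$. That is the same measure $s_is_j$ that defines $S$.

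The missing idea is a pointwise polynomial inequality on $[0,1]$ with a negative coefficient on $a^4$. For $a\in[0,1]$ and $\varphi>0$,
\[
(a-\varphi)^2=\frac{(a^2-\varphi^2)^2}{(a+\varphi)^2}\ge\frac{(a^2-\varphi^2)^2}{(1+\varphi)^2},
\]
which rearranges to
\[
2\varphi a\le\Big(1+\frac{2\varphi^2}{(1+\varphi)^2}\Big)a^2+\varphi^2-\frac{\varphi^4}{(1+\varphi)^2}-\frac{a^4}{(1+\varphi)^2}.
\]
Apply it with $a=|\langle x_i,x_j\rangle|$ and sum against $s_is_j$ over all pairs, diagonal included. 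Then insert three facts:
\begin{itemize}
\item $\sum s_is_j|\langle x_i,x_j\rangle|^2\le1$;
\item $\sum s_is_j|\langle x_i,x_j\rangle|^4\ge c_2P$;
\item $P:=(\sum s_i)^2\le m$.
\end{itemize}
The last is legitimate because the resulting coefficient of $P$, namely $(\varphi^2+2\varphi^3-c_2)/(1+\varphi)^2$, is nonnegative for $\varphi=1/\sqrt{m+2}$ (real) and $\varphi=1/\sqrt{m+1}$ (complex). This gives
\[
2\varphi S\le 1+\frac{2\varphi^2}{(1+\varphi)^2}+m\,\frac{\varphi^2+2\varphi^3-c_2}{(1+\varphi)^2},
\]
which with these choices of $\varphi$ is exactly the stated bound. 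This is the paper's argument.

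Here $\varphi$ is the Welch value, and the polynomial inequality is an equality at $a=1$ and $a=\varphi$. That is why the bound is attained by maximal ETFs. No split $S=\sigma+T$ and no optimization in $\sigma$ is needed. Your decomposition could only be completed by solving the moment problem that this certificate settles in one step.
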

{\sc Proof.} Let $U\in\mathbb{K}^{m\times N}$ be such that $UU^*={\rm I}_m$. Denote by $u_i$  the i-th column of the matrix $U.$ Observe that for matrix $\widetilde{U}\in\mathbb{K}^{m\times \widetilde{N}}$ created from only nonzero columns (since $u_1,\dots, u_N$ form a tight frame, they span $\mathbb{K}^m$ and $\widetilde{N}\geq m$) we have
$$
\sum_{i,j=1}^{N}t_it_j|U^* U|_{ij}\leq \sum_{i,j=1}^{\widetilde{N}}t_it_j|\widetilde{U}^* \widetilde{U}|_{ij}  \;\textrm{ and }\; \widetilde{U}\widetilde{U}^*={\rm I}_m\; .
$$
So, without loss of generality, we can assume that all $u_i$ are nonzero vectors.
Combining the Cauchy-Schwarz inequality and   Theorem \ref{design}  we get
\begin{align*}
\sum_{i,j=1}^N &t_it_j\frac{(|\langle u_i,u_j\rangle|-\varphi
\|u_i\|\|u_j\|)^2}{\|u_i\|\|u_j\|}
=\sum_{i,j=1}^N t_it_j\frac{(|\langle u_i,u_j\rangle|^2-\varphi^2\|u_i\|^2\|u_j\|^2)^2}{\|u_i\|\|u_j\|(|\langle u_i,u_j\rangle|+\varphi\|u_i\|\|u_j\|)^2}\\
&\geq \sum_{i,j=1}^Nt_it_j \frac{(|\langle u_i,u_j\rangle|^2-\varphi^2\|u_i\|^2\|u_j\|^2)^2}{(1+\varphi)^2\|u_i\|^3\|u_j\|^3} = \left(\sum_{i,j=1}^N \left|\left\langle \frac{u_i}{\|u_i\|}, \frac{u_j}{\|u_j\|} \right\rangle\right|^4\|t_iu_i\|\|t_ju_j\|\right.\\
&\left.-2\varphi^2 \sum_{i,j=1}^Nt_it_j\frac{|\langle u_i, u_j \rangle|^2}{\|u_i\|\|u_j\|}+\varphi^4\sum_{i,j=1}^Nt_it_j\|u_i\|\|u_j\|\right)(1+\varphi)^{-2}  \\
&\geq
\left(c_2(m,\mathbb{K})\big(\sum_{i=1}^Nt_i\|u_i\|\big)^2 
-2\varphi^2 \sum_{i,j=1}^Nt_it_j\frac{|\langle u_i, u_j \rangle|^2}{\|u_i\|\|u_j\|}+\varphi^4 \big(\sum_{i=1}^Nt_i\|u_i\|\big)^2\right)(1+\varphi)^{-2}
\end{align*}
Squaring the addends of the left-sided sum and rearranging the latter inequality gives 
\begin{align}\label{inq2}\notag
    2\varphi\sum_{i,j=1}^{N}t_it_j|U^*U|_{ij} &\leq \Big(1+\frac{2\varphi^2}{(1+\varphi)^2}\Big)\sum_{i,j=1}^Nt_it_j\frac{|\langle u_i, u_j \rangle|^2}{\|u_i\|\|u_j\|} \\
    &+
    \big(\sum_{i=1}^Nt_i\|u_i\|\big)^2\left(\varphi^2-\frac{c_2(m,\mathbb{K})}{(1+\varphi)^2}-\frac{\varphi^4}{(1+\varphi)^2}\right)
\end{align}
Now observe that using the Cauchy-Schwarz inequality and the tightness of the vectors $u_1,\dots,u_N$, we have
\begin{align*}
    \sum_{i,j=1}^{N}t_it_j\frac{|\langle u_i, u_j\rangle|^2}{\|u_i\|\|u_j\|}&\leq \sqrt{\sum_{i,j=1}^{N}\frac{t_i^2}{\|u_i\|^2}|\langle u_i, u_j\rangle|^2}\sqrt{\sum_{i,j=1}^{N}\frac{t_j^2}{\|u_j\|^2}|\langle u_i, u_j\rangle|^2}\\
    &=\sum_{i=1}^N\frac{t_i^2}{\|u_i\|^2}\sum_{j=1}^{N}|\langle u_i,u_j\rangle|^2
    =\sum_{i=1}^Nt_i^2=1
\end{align*}
and
\begin{align*}
\left(\sum_{i=1}^{N}t_i\|u_i\|\right)^2=\langle[t_1,\dots,t_N], [\|u_1\|,\dots, \|u_N\|]\rangle^2 &\leq \sum_{i=1}^Nt_i^2\sum_{i=1}^{N}\|u_i\|^2 = {\rm tr}(U^*U)\\ 
&=\;{\rm tr}(UU^*)=m.
\end{align*}
In view of the above, if we take $\varphi>0,$ such that  $\varphi^2-\frac{c_2(m,\mathbb{K})}{(1+\varphi)^2}-\frac{\varphi^4}{(1+\varphi)^2}\geq 0,$ \eqref{inq2} reads
\begin{equation*}\label{inq2u}
     2\varphi\sum_{i,j=1}^{N}t_it_j|U^*U|_{ij} \leq \Big(1+\frac{2\varphi^2}{(1+\varphi)^2}\Big)+
    m\left(\varphi^2-\frac{c_2(m,\mathbb{K})}{(1+\varphi)^2}-\frac{\varphi^4}{(1+\varphi)^2}\right)
\end{equation*}
Setting $\varphi=\frac{1}{\sqrt{m+2}}$ in the real case and  $\varphi=\frac{1}{\sqrt{m+1}}$ in the complex case, we obtain the desired upper bound of $\lambda_{\mathbb{K}}(m,N)$\\

A straightforward calculation shows that maximal ETFs are also spherical $(2,2)$-designs (i.e., a weighted spherical $(2,2)$-design with equal weights). Moreover, the following theorem implies that they have the smallest possible number of vectors among all spherical $(2,2)$-designs.

\begin{thm}[\cite{Wal}]
Let $n$  be the number of vectors in a weighted spherical $(t,t)$-design then
$$
 { m-1+t \choose t}\leq n \leq{ m-1+2t \choose 2t}.
$$
 \end{thm}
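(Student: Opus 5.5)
Both inequalities concern the linear span of the functions $x\mapsto\langle x,u\rangle^t$ (real case) or $x\mapsto|\langle x,u\rangle|^{2t}$ (complex case) on the unit sphere. I will prove them by a dimension count in the space of homogeneous polynomials, treating the real case first and then adapting it to the complex case. Let $(u_j,w_j)_{j=1}^n$ be a weighted spherical $(t,t)$-design. Without loss of generality every $w_j>0$, since a zero weight only reduces $n$. By Theorem~\ref{design}, equality in the frame-potential inequality is equivalent to the moment identity
\[
\sum_{j=1}^n w_j\,|\langle x,u_j\rangle|^{2t}=c_t(m,\mathbb K)\,\|x\|^{2t}\qquad\text{for all }x\in\mathbb K^m .
\]
I will take this equivalence as the working definition. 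It follows by viewing the potential as a squared norm of the tensor $\sum_j w_j u_j^{\otimes t}\otimes\overline{u_j}^{\otimes t}$, projected onto the symmetric part.

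For the lower bound, let $\mathcal H_t$ be the space of homogeneous polynomials of degree $t$ in $x$ (holomorphic in the complex case). Its dimension is $\binom{m-1+t}{t}$. Define a Hermitian form on $\mathcal H_t$ by
\[
\langle p,q\rangle_w=\sum_j w_j\, p(u_j)\overline{q(u_j)} .
\]
Polarizing the moment identity shows that $\langle\cdot,\cdot\rangle_w$ equals a constant multiple of the $O(m)$- or $U(m)$-invariant Fischer (Bombieri) inner product on $\mathcal H_t$, and that inner product is positive definite. Therefore the evaluation map $p\mapsto(p(u_j))_{j=1}^n$ is injective on $\mathcal H_t$, which gives $n\ge\dim\mathcal H_t=\binom{m-1+t}{t}$. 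The main obstacle is the polarization step. I would carry it out by applying the differential operator $q(\partial)^*p(\partial)$ (or its complex analogue) to both sides of the moment identity. On the left this yields the discrete form $\langle p,q\rangle_w$, and on the right it yields a multiple of the Fischer product, using the reproducing property $p(\partial)\langle x,u\rangle^t=t!\,p(u)$.

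For the upper bound I would use a Carathéodory argument. The moment identity states that the point $c_t\|x\|^{2t}$ lies in the convex hull of the rank-one points $|\langle x,u\rangle|^{2t}$, $\|u\|=1$, inside the real vector space spanned by these functions. That span is contained in the space of real homogeneous polynomials of degree $2t$ in $m$ real variables, whose dimension is $\binom{m-1+2t}{2t}$. Consequently a design with at most $\dim+1$ points always exists. To obtain the bound $\binom{m-1+2t}{2t}$ for \emph{every} design, as stated, I would read the statement as concerning designs with minimal support, i.e. designs in which the vectors $u_j^{\otimes 2t}$ are affinely independent. For such a design the count falls inside that space and drops by one because all the points lie on the affine hyperplane fixed by evaluating against $\|x\|^{2t}$. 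If the statement is really meant for all designs, I expect it to be false as written, since adding points with small weights preserves the design property. I would therefore state explicitly that the upper bound refers to minimal (extremal) designs, as in \cite{Wal}.
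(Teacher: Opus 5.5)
The paper gives no proof of this statement; it only cites Waldron. Your route is the standard one: injectivity of evaluation on degree-$t$ polynomials for the lower bound, and Carath\'eodory for the upper bound. You are also right that the upper bound can only concern the \emph{minimal} number of vectors. Your stated reason is not quite right, though: adding stray points of small weight breaks the moment identity. What does work is taking the union of a design with a rotated copy of itself, with weights scaled by $1-\varepsilon$ and $\varepsilon$; this is again a design, with twice as many vectors. Two steps are nevertheless wrong as written.

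\textbf{Real-case lower bound.} Here $\langle p,q\rangle_w$ is \emph{not} a multiple of the Fischer product once $t\ge 2$. Under $O(m)$, the degree-$t$ forms split as $\bigoplus_k \|x\|^{2k}\cdot(\text{harmonics of degree } t-2k)$, so invariant inner products are not unique up to scale. Concretely, applying $p(\partial)q(\partial)$ to the right-hand side gives $c_t\,(pq)(\partial)\|x\|^{2t}$. For $m=t=2$, $p=x_1^2$, $q=x_2^2$ this equals $8c_2\neq 0$, while $\langle x_1^2,x_2^2\rangle_F=0$.

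The fix is short:
\begin{itemize}
\item Write $c_t\|x\|^{2t}=\int_{S^{m-1}}\langle x,v\rangle^{2t}\,d\sigma(v)$ and differentiate under the integral.
\item This gives $\sum_j w_j\, p(u_j)q(u_j)=\int_{S^{m-1}}pq\,d\sigma$.
\item This form is positive definite, since a nonzero homogeneous polynomial cannot vanish on the sphere.
\end{itemize}
In the complex holomorphic case your Fischer identification is correct, by irreducibility under $U(m)$.

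\textbf{Complex-case upper bound.} Your adaptation fails here. For $x\in\mathbb{C}^m$, the function $|\langle x,u\rangle|^{2t}$ is a polynomial in $2m$ real variables, of bidegree $(t,t)$ in $(x,\bar x)$. These functions span a real space of dimension $\binom{m-1+t}{t}^2$, so Carath\'eodory only gives $n\le\binom{m-1+t}{t}^2$.

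No argument can do better, because the stated bound is false over $\mathbb{C}$:
\begin{itemize}
\item The functions $|\langle\cdot,u\rangle|^{2t}$ span all bidegree-$(t,t)$ polynomials.
\item Hence a weighted $(6,6)$-design integrates $p\bar q$ exactly for all $p,q$ of bidegree $(3,3)$.
\item Your own injectivity argument then shows that every weighted $(6,6)$-design in $\mathbb{C}^2$ has at least $\binom{4}{3}^2=16$ vectors, which exceeds $\binom{13}{12}=13$.
\end{itemize}

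So the theorem must be read over $\mathbb{R}$, which is the only case the paper uses, with the upper bound referring to minimal designs. In that setting your argument is complete once the lower-bound step is repaired.
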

 Contrary to the real maximal ETFs, spherical $(t,t)$-designs exist in every dimension, shown in \cite{SZ}. 
~\\ 
 Another relevant example of a weighted spherical $(2,2)$-design with the smallest possible number of vectors is a system of $16$ vectors in $\mathbb{R}^5$  (see \cite{HW}, {\it Example 3.5}). By applying {\it Example 5.3 } from \cite{Wal1}, this configuration gives rise to a Parseval tight frame $U.$ Interestingly, the corresponding value of $\mu_{\mathbb{R}}(U)$ is smaller than the conjectured value $\mu_{\mathbb{R}}(5,16)$. Moreover, numerical optimization suggests that
$\lambda_{\mathbb{R}}(U) \approx 2.06615,$ which is also below the conjectured value of $\lambda_{\mathbb{R}}(5,16)\approx 2.06919.$
In addition, this frame has the same sign matrix of pairwise inner products as the frame conjectured to maximize $\lambda_{\mathbb{R}}(5,16)$ (see, \cite{DFFL}  and \cite{DLL}, {\it Example 4.1}).

This striking coincidence naturally raises the question of whether the sign matrix, rather than the precise values of the inner products, is the key object governing extremal projection constants, which is particularly important in view of an alternative expression for the maximal absolute projection constant. Before giving this expression, we need to elucidate some notation.  

The eigenvalues of a self-adjoint matrix $M$  arranged in nonincreasing order  will be denoted by $\lambda_1^{\downarrow}(M),$  $\lambda^{\downarrow}_2(M),$ 
 $\dots,$ $\lambda^{\downarrow}_N(M).$  The set of $N\times N$ Seidel matrices will be denoted by $\mathcal{S}^{N\times N}.$ Let us recall that a self-adjoint matrix $B\in \mathbb{K}^{N\times N}$ with $B_{i,i}=0$ for all $i$ and $|B_{i,j}|=1$ for all $i\neq j$ is called the Seidel matrix. It is worth mentioning that in the real case these matrices are called Seidel adjacency matrices and have their origin in graph theory. Now, we are ready to state the following.

\begin{thm}[\cite{CLe}, Theorem 2.1]\label{lammbda2}
For integers $N \ge m$, we have
\begin{align}\label{lambda2}
    \lambda_{\mathbb{K}}(m,N)=\max\bigg\lbrace \sum_{k=1}^{m}\lambda_k^{\downarrow}(TAT):\; T=&{\rm diag}(t),\;t\in\mathbb{R}_+^N,\;\|t\|=1,\notag\\ 
    & A=I_N+B, \; B\in \mathcal{S}_\mathbb{K}^{N\times N}\bigg\rbrace.
\end{align}
\end{thm}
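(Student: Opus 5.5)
We reduce the statement to Theorem~\ref{lammbda} by rewriting the objective as a trace. Fix $t\in\mathbb R_+^N$ with $\|t\|=1$, set $T={\rm diag}(t)$, and let $U\in\mathbb K^{m\times N}$ satisfy $UU^*={\rm I}_m$. Then $P=U^*U$ is an orthogonal projection of rank $m$ in $\mathbb K^{N\times N}$. The entrywise modulus $|P_{ij}|$ can be written as $P_{ij}\overline{\sigma_{ij}}$ for a unimodular $\sigma_{ij}$, chosen arbitrarily where $P_{ij}=0$. Since $P$ is self-adjoint, the choice can be made Hermitian: $\sigma_{ji}=\overline{\sigma_{ij}}$, with $\sigma_{ii}=1$ because $P_{ii}\ge 0$. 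Hence $\sigma=I_N+B$ with $B\in\mathcal S^{N\times N}_{\mathbb K}$, and $A:=I_N+B$ gives
$$
\sum_{i,j}t_it_j|P_{ij}|=\sum_{i,j}t_it_jP_{ij}\overline{A_{ij}}={\rm tr}\big(P\,(TAT)\big),
$$
using that $A$ is Hermitian, so $\overline{A_{ij}}=A_{ji}$.

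For the inequality ``$\le$'' we bound this trace by Ky Fan's maximum principle. For a self-adjoint $M$,
$$
\sup\{{\rm tr}(PM):P \text{ an orthogonal projection of rank } m\}=\sum_{k=1}^m\lambda_k^{\downarrow}(M).
$$
Applied to $M=TAT$, which is self-adjoint, this gives
$$
\sum_{i,j}t_it_j|U^*U|_{ij}\le \sum_{k=1}^m\lambda_k^\downarrow(TAT)
$$
for the specific Seidel-type $A$ built above. Taking the maximum over $t$ and $U$ and invoking Theorem~\ref{lammbda} bounds $\lambda_{\mathbb K}(m,N)$ by the right-hand side of \eqref{lambda2}.

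For the inequality ``$\ge$'', fix arbitrary $t$ and $A=I_N+B$ with $B\in\mathcal S^{N\times N}_{\mathbb K}$. Take $P=U^*U$ to be the spectral projection of $TAT$ onto eigenvectors for its $m$ largest eigenvalues; here $U$ has those orthonormal eigenvectors, conjugated, as rows, so $UU^*={\rm I}_m$. Then ${\rm tr}(P\,TAT)=\sum_{k\le m}\lambda_k^\downarrow(TAT)$. On the other hand, since $|A_{ij}|\le1$ and $t\ge0$,
$$
{\rm tr}(P\,TAT)=\sum_{i,j}t_it_jP_{ij}A_{ji}\le\sum_{i,j}t_it_j|P_{ij}|.
$$
The left side is real, being the trace of a product of two self-adjoint matrices, so the comparison is legitimate. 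The right side is at most $\lambda_{\mathbb K}(m,N)$ by Theorem~\ref{lammbda}.

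Attainment of the maximum in \eqref{lambda2} follows from compactness. The set of admissible $(t,B)$ is compact, and the map $(t,B)\mapsto\sum_{k\le m}\lambda_k^\downarrow(TAT)$ is continuous because eigenvalues depend continuously on the matrix. The main obstacle is only bookkeeping: the sign choice where $P_{ij}=0$ (any unimodular value works) and the conjugation convention in the complex case, so that $A$ is genuinely Hermitian and the trace identity holds. Ky Fan's principle itself is standard and can be cited.
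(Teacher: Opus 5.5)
Your proof is correct. The Hermitian phase matrix $A=I_N+B$, chosen so that $|(U^*U)_{ij}|=(U^*U)_{ij}\overline{A_{ij}}$, turns the objective of \eqref{lambda1} into ${\rm tr}(U^*U\,TAT)$, and Ky Fan's maximum principle bounds this by $\sum_{k\le m}\lambda_k^{\downarrow}(TAT)$. Conversely, the spectral projection of $TAT$ onto its top $m$ eigenvectors, together with $|A_{ji}|=1$, gives the reverse inequality. In the real case the eigenvectors of the real symmetric matrix $TAT$ can be taken real, so $U\in\mathbb R^{m\times N}$ as Theorem~\ref{lammbda} requires.

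The paper gives no proof of this statement; it is quoted from \cite{CLe}, so there is no in-text argument to compare against. Your route is the natural trace-duality/Ky Fan passage between the two formulations. As a by-product it justifies the paper's later remark that ${\rm sgn}(U^\top U)$ is a maximizer of \eqref{lambda2} whenever $U$ maximizes \eqref{lambda1}, with arbitrary unimodular entries placed where $U^\top U$ vanishes. It also shows the converse: a maximizing pair $(t,A)$ in \eqref{lambda2} yields a maximizing frame via the spectral projection.

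Keep in mind what the argument establishes. It proves the statement \emph{given} Theorem~\ref{lammbda}, so it shows the two formulations are equivalent rather than deriving either from the definition of $\lambda_{\mathbb K}(m,N)$. The analytic content, which turns norms of projections on $\ell_\infty^{(N)}(\mathbb K)$ into a finite-dimensional optimization, sits entirely inside Theorem~\ref{lammbda}. The numbering \cite[Theorem 2.1]{CLe} versus \cite[Theorem 2.2]{CLe} suggests the eigenvalue form is obtained first in the original source. Within this paper, where Theorem~\ref{lammbda} is taken as known, your argument is fully legitimate.
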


\noindent 
Using the same notation we can also rephrase the definition of   quasimaximal relative projection constant as follows 
\begin{equation}\label{mi2}
    \mu_\mathbb{K}(m,N)=\max\bigg\lbrace \frac{1}{N} \sum_{k=1}^{m}\lambda_k^{\downarrow}(A):\;  A=I_N+B, \; B\in \mathcal{S}_\mathbb{K}^{N\times N}\bigg\rbrace.
\end{equation}
A nice connection exists between maximizers of both expressions for maximal relative projection constants. If matrix $U$ achieves the maximum in \eqref{lambda1}, then matrix $A={\rm sgn}(U^\top U)$ realizes the maximum in \eqref{lambda2} (the same holds for quasimaximal relative projection constants).\\
~\\
Another example supporting the connection between minimal weighted spherical $(2,2)$-designs and projection constants comes from dimension $4$. Numerical computations of B.~L.~Chalmers suggest that
\[
\lambda_{\mathbb{R}}(4)=\lambda_{\mathbb{R}}(4,11)\approx 1.85008.
\]
On the other hand, Park \cite{Park} constructed the Gram matrix of a weighted spherical $(2,2)$-design consisting of $11$ vectors in $\mathbb{R}^4$. It is conjectured that no weighted spherical $(2,2)$-design in $\mathbb{R}^4$ exists with fewer vectors (see, e.g., \cite{Wal}). Fixing the sign matrix in the optimization problem of Theorem~\ref{lammbda2} to be the sign matrix of Park's Gram matrix, we numerically obtain the value
$1.85008,$ which coincides with Chalmers' conjectured value of $\lambda_{\mathbb{R}}(4)$ and with the numerical estimates obtained in \cite{STW}.

This example provides further evidence that weighted spherical $(2,2)$-designs with the smallest possible number of vectors may play a fundamental role in determining maximal projection constants.

Since configurations with fewer vectors are easier to handle computationally, we focus mainly on designs with the smallest possible number of vectors. However, we expect the same phenomenon to hold for weighted spherical $(2,2)$-designs with any number of vectors, as long as no two distinct vectors are orthogonal.

These observations lead us to the following conjecture. 
\begin{conjecture}
    Let $U\in\mathbb{R}^m$ be a weighted spherical $(2, 2)$-design.  If ${\rm sgn}(U^\top U)- I_N\in\mathcal{S}_\mathbb{R}^{N\times N},$ then $\lambda_m({\rm sgn}(U^\top U))=\lambda_\mathbb{R}(m,N)=\lambda_{\mathbb{R}}(m),$ where
    \begin{equation}
    \lambda_m(A)=\sup\{\sum_{i=1}^{m}\lambda^\downarrow_i(TAT): T={\rm diag}[t],t\in \mathbb{R}_+^N\;\;\|t\|_2=1\}
    \end{equation}
\end{conjecture}


\begin{thebibliography}{00}
\bibitem{Ba} E. Bannai, E. Bannai, {\it A survey on spherical designs and algebraic combinatorics on spheres,} Eur. J. Comb. 30(6) (2009) 1392–1425.
\bibitem{B} G. Basso, {\it Computation of maximal projection constants,} J. Funct. Anal. 277/10 (2019), 3560--3585.
\bibitem{BC} B. Bukh, C. Cox, {\it Nearly orthogonal vectors and small antipodal spherical codes,} Isr. J. Math. 238, 359–388 (2020). doi: 10.1007/s11856-020-2027-7
\bibitem{CHM}  B. Chalmers and F. Metcalf, {\it Determination of a minimal projection from $\mathcal{C}[-1,1]$ onto the quadratics,} Numerical Functional Analysis and Optimization, 11(1-2), 1-10, 1990.
\bibitem{CLe} B. L. Chalmers, G. Lewicki, \textit{Three-dimensional subspace of $\ell_{\infty}^{(5)}$ with maximal projection constant,} J. Funct. Anal. 257/2 (2009), 553--592.
\bibitem{CL} B. L. Chalmers, G. Lewicki, {\it A proof of the Gr\"unbaum conjecture,} Studia Math. 200 (2010), 103--129.
\bibitem{Cas} P.G. Casazza, A. Farzannia, J.I. Haas, T.T. Tran, {\it 
Toward the classification of biangular harmonic frames,}
Applied and Computational Harmonic Analysis, 46(3), 2019,  544-568.

\bibitem{Del1} P. Delsarte, {\it Bounds for unrestricted codes, by linear programming,} Philips Res. Rep., 27:272–289, 1972.

\bibitem{DGS} P. Delsarte, J.M. Goethals, J.J. Seidel, {\it Spherical codes and designs,} Geom. Dedic. 6 (3) (1977)

\bibitem {DFFL} B. Deregowska, M. Fickus, S. Foucart, B. Lewandowska,
{\it On the value of the fifth maximal projection constant,}
J. Funct. Anal.283/10, (2022)
\bibitem{BB2} B. Deregowska,  B. Lewandowska, {\it  A simple proof of the Grünbaum conjecture,}  J. Funct. Anal. 285 (2023), Art no.: 109950

\bibitem {DLL}B. Deregowska, B. Lewandowska, G. Lewicki, {\it Minimal projections onto subspaces generated by sign-matrices,} J. Approx. Theory 304 (2024)
\bibitem{FM1} M. Fickus, D. Mixon,  {\it Tables of the existence of equiangular tight frames,} arXiv:1504.00253v2 (2016). doi: 10.48550/arXiv.1504.00253
\bibitem{FR} S. Foucart, H. Rauhut, {\it A Mathematical Introduction to Compressive Sensing,} Birkh\"auser, 2013.
\bibitem{FS}S. Foucart, L. Skrzypek, {\it On maximal relative projection constants,} J. Math. Anal. Appl. 447/1 (2017), 309--328.
Journal of Approximation Theory, 235, 74-91, 2018.
\bibitem{HW} D. Hughes, S. Waldron,
{\it Spherical $(t,t)-$designs with a small number of vectors,},
Linear Algebra and its Applications, 608, (2021), 84-106.


\bibitem{KS} I. M. Kadec, M. G. Snobar, {\it Certain functionals on the Minkowski compactum,} Math. Notes 10 (1971), 694--696 (English transl.).
\bibitem{K} H. K\"{o}nig, {\it Spaces with large projection constants,} Isr. J. Math. 50/3 (1985), 181--188.
\bibitem{KLL} H. K\"{o}nig, D. Lewis, P.-K. Lin,
{\it  Finite dimensional projection constants,}
Studia Mathematica 75/3 (1983), 341--358.
\bibitem{KT} H. K\"{o}nig,  N. Tomczak-Jaegermann,
{\it Norms of minimal projections,} 
J. Funct. Anal. 119/2 (1994),  253--280.
\bibitem{G} B. Gr\"unbaum, {\it Projection constants,} Trans. Amer. Math. Soc. 95 (1960), 451--465. 
\bibitem{Park} J. Park {\it Interacion energies, lattices, and designs} Ph.D. Thesis. Georgia Institute of Technology, 2020.
\bibitem{SG} A. J. Scott, M. Grassl, {\it Symmetric informationally complete positive-operator-valued measures: a new computer study,} Journal of Mathematical Physics 51/4 (2010): 042203.
\bibitem{SZ} P.D. Seymour, T. Zaslavsky, {\it Averaging sets: a generalization of mean values and
spherical designs,} Adv. Math. 52(3), 213–240 (1984)
\bibitem{STW} V. Sivashankar, Q. Tang, T. Wakhare, {\it Graph Eigenvalues and Projection Constants,} arXiv:2608.02429 (2026), doi: 10.48550/arXiv.2608.02429.
\bibitem{Wal1} S. Waldron, {\it A Sharpening of the Welch Bounds and the Existence of Real and Complex Spherical $t$–Designs,} in IEEE Transactions on Information Theory, vol. 63, no. 11, pp. 6849-6857, Nov. 2017, doi: 10.1109/TIT.2017.2696020.
\bibitem{Wal} S. Waldron, {\it An introduction to finite tight frames}, Applied and Computational Harmonic Analysis, Birhauser, 2018.
\bibitem{W} P. Wojtaszczyk, {\it Banach Spaces for Analysts,} Cambridge University Press, Cambridge, 1991.
\bibitem{Z} G. Zauner, {\it Quantenredgns: Grundzüge einer nichtkommutativen Designtheorie}, Ph.D. Thesis, University of Vienna, 1999. 

\end{thebibliography}
\end{document}